\documentclass[12pt]{article}
\usepackage{amsmath, amssymb, amsthm}
\usepackage{mathtools}    
\usepackage{mathdots}      
\usepackage{mathrsfs}

\usepackage{geometry}

\usepackage{xcolor}        
\usepackage{graphicx,url,hhline}
\usepackage[english]{babel}

\usepackage{longtable,array,multirow}
\usepackage{booktabs}

\usepackage{tikz}
\usetikzlibrary{arrows.meta, shapes.geometric, positioning, calc}

\usepackage{enumitem}
\usepackage{indentfirst}
\usepackage{adjustbox}

\usepackage{algorithm}
\usepackage{algorithmicx}
\usepackage{algpseudocode}

\usepackage{subcaption}
\usepackage{caption}
\usepackage{float, placeins}
\usepackage{etoolbox}

\usepackage{hyperref} 
\makeatletter
\let\org@cite\cite
\renewcommand{\cite}[1]{\textbf{\org@cite{#1}}}
\makeatother
\newtheorem{theorem}{Theorem}[section]
\newtheorem{lemma}[theorem]{Lemma}

\newtheorem{definition}[theorem]{Definition}
\newtheorem{remark}[theorem]{Remark}
\newtheorem{example}[theorem]{Example}

\counterwithin{figure}{section}  
\numberwithin{equation}{section}

\newtheorem*{Main Theorem}{Main Theorem}

\tikzset{
    arrow/.style={->, >=Stealth, thick},    
    node/.style={circle, draw=none, fill=none, font=\tiny}, 
    red-bar/.style={line width=3pt, red},   
    mut-label/.style={midway, font=\scriptsize, #1}, 
    blue/.style={color=blue},
    green/.style={color=green!70!black},
    purple/.style={color=purple},
    black/.style={color=black},
}

\title{\large \textbf{Effective Computation of Mutation Paths and Generators of Cluster Automorphism Groups}}
\author{Jindong Zhao and Haiyan Zhu  \\ School of Mathematical Sciences, Zhejiang University of Technology}

\begin{document}
\date{}
\maketitle

\begin{abstract}
In this paper, we improve the marked-vertex strategy introduced by Fu and Liang,  and design the algorithm to compute all mutation paths and generator elements of cluster automorphism groups efficiently. As an application, we get generators of cluster automorphism groups of cluster algebras of finite mutation type of rank 4.
\end{abstract}
\textbf{Keywords:} Cluster algebra, Cluster automorphism group, Marked-vertex strategy , Finite mutation type

\noindent \textbf{2020 MSC:} 13F60; 16Z05

\renewcommand{\thefootnote}{} 

\footnote{Supported by the National Natural Science Foundation of China (12271481).}

\tableofcontents  

\section{Introduction}
Cluster algebras, introduced by Fomin and Zelevinsky~\cite{I. Assem}, are commutative rings endowed with a combinatorial structure determined by seeds and mutations.
Their rich combinatorial and algebraic properties establish profound connections to a wide range of mathematical fields, including representation theory, integrable systems, and algebraic geometry.
In~\cite{I. Assem}, Assem, Schiffler and Shramchenko introduced and characterized cluster automorphisms of a cluster algebra, and computed the cluster automorphism groups for finite and affine types.
Since then, cluster automorphism groups have been extensively investigated.
Blanc and Dolgachev~\cite{bd15} used geometric methods to determine the cluster automorphism groups of cluster algebras of rank two.
Chang and Zhu~\cite{chang2016cluster-finite} explicitly described the cluster automorphism groups of finite type cluster algebras via folding techniques.
The cluster automorphism groups for cluster algebras arising from marked surfaces have been further studied in~\cite{Gu11,BS15,DL25}.
Recently, Fu and Liang~\cite{FuLiang} used a pseudo $\mathbb{N}$-grading to compute the generating sets of cluster automorphism groups for cluster algebras of rank~$3$.

However, the computation of $\operatorname{Aut}(\mathcal{A})$ of cluster algebras of rank 4 presents substantial challenges:
(1) the lack of a complete classification framework,
(2) severe combinatorial explosion and intrinsic computational complexity in determining generating sets,
(3) the highly intricate polynomial relations among potential generators render manual verification and simplification a significant challenge, even for small rank 4 cases.

Motivated by the approach of Fu and Liang, we proceed with a similar strategy.  
For the cluster algebra of Dynkin type $\mathbb{D}_4$, the mutation graph contains multiple disjoint cyclic paths, as illustrated in Figure~\ref{fig:4-node-added}.  
Consequently, the procedure becomes trapped in an infinite loop under Assumptions ($\diamond$).  

To resolve this issue and establish a valid termination condition, we improve the \textbf{marked-vertex strategy} to refine the generating set constructed by Fu and Liang.  
We then design effective algorithms to compute the generating sets of cluster automorphism groups. And as an application, we get generators  of finite mutation type cluster algebras of rank 4.  
Subsequently, we eliminate redundant generators — a process similar to computing a reduced Gr\"{o}bner basis, which reveals the importance of essential generators — to obtain the essential generators of each cluster automorphism group.

The paper is organized as follows. In Section~\ref{s2}, we recall the basic definitions relative to cluster algebras and cluster automorphisms. 
Then we use an example to illustrate Fu-Liang's method and highlight the necessity of algorithm optimization in cluster algebras of rank 4. In Section~\ref{s4}, we generalize the approach of Fu-Liang by adding marked vertices in Theorems~\ref{thm:refined-generator00}. After introducing the marked-vertex strategy, we establish algorithms for calculating cluster automorphism groups. In Appendix~\ref{s3}, according to \cite{felikson2012cluster}, all diagrams of finite-mutation skew symmetrizable matrices are listed in Table~\ref{jjjsblocks}.
As an application of Algorithms~\ref{iii}, we determine generators of the cluster automorphism groups of finite mutation types of rank 4 with indecomposable exchange matrices in Section~\ref{s5}. Furthermore, we characterize the relations among generators.

\newpage

\section{Preliminaries}
\label{s2}

In this section, we recall basic definitions in cluster algebras and restrict our attention to cluster algebras with trivial coefficients, which were introduced by Fomin and Zelevinsky in \cite{FominZelevinsky2002} and let \( n \) be a positive integer.

\subsection{Cluster algebras}

\begin{definition}
For an \( n \times n \) integer matrix \( B = (b_{ij})_{n \times n} \):  
\begin{enumerate}[label=(\arabic*)]
    \item \( B \) is \textbf{skew-symmetric} if \( B^\top = -B \).  
    \item \( B \) is \textbf{skew-symmetrizable} if there exists a positive-integer diagonal matrix \( D = \operatorname{diag}(d_1, d_2, \dots, d_n) \) such that \( DB = (d_i b_{ij})_{n \times n} \) is skew-symmetric. In this case, \( D \) is called a \textbf{skew-symmetrizer} of \( B \).  
\end{enumerate}
\end{definition}

\begin{definition}
Let \(\mathcal{F}\) be a purely transcendental extension field of \(\mathbb{Q}\). A \textbf{labeled} seed is a pair \((\mathbf{x}, B)\) consisting of:
\begin{itemize}
    \item \(\mathbf{x} = (x_1, \ldots, x_n)\), a transcendence basis of \(\mathcal{F}\) over \(\mathbb{Q}\); elements of \(\mathbf{x}\) are called \textbf{cluster variables}, and \(\mathbf{x}\) itself is termed a \textbf{cluster}.
    \item \(B = (b_{ij})\), an \(n \times n\) skew-symmetric integer matrix, referred to as the \textbf{exchange matrix}. 
\end{itemize}
\end{definition}

\begin{definition}
Given a seed \((\mathbf{x}, B)\) and an index \(k \in \{1, \cdots, n\}\), the mutation \(\mu_k\) in direction \(k\) transforms \((\mathbf{x}, B)\) into a new seed \((\mathbf{x}', B') = \mu_k(\mathbf{x}, B)\), where:\\
- \textbf{Mutation of cluster variables}: The new cluster \(\mathbf{x}' = (x_1', \cdots, x_n')\) satisfies \(x_i' = x_i\) for all \(i \neq k\), while \(x_k'\) is determined by the exchange relation:
\[
x_k x_k' = \prod_{\substack{i \\ b_{ik} > 0}} x_i^{b_{ik}} + \prod_{\substack{i \\ b_{ik} < 0}} x_i^{-b_{ik}}.
\]
- \textbf{Mutation of the exchange matrix}: The entries of the new exchange matrix \(B' = (b_{ij}')\) are given by:
\[
b_{ij}' = \begin{cases} 
-b_{ij} & \text{if } i = k \text{ or } j = k, \\
b_{ij} + \frac{1}{2} \left( |b_{ik}| b_{kj} + b_{ik} |b_{kj}| \right) & \text{otherwise}.
\end{cases}
\]
\end{definition}

The mutation is an involution, i.e.,  $\mu_k\mu_k(\mathbf{x}, B) =(\mathbf{x}, B)$.
Let \( \mathbb{T}_n \) be the \( n \)-regular tree whose edges are labeled by the numbers \( 1, \ldots, n \) such that the \( n \) edges emanating from each vertex carry distinct labels. For each vertex \( t \in \mathbb{T}_n \), there is a unique reduced sequence \( i_k \ldots i_1 \) such that \( t_0 \) and \( t \) are connected by edges labeled by \( i_1, \ldots, i_k \), i.e., $
t_0 \stackrel{i_1}{\rule[0.5ex]{1em}{0.4pt}} \cdot \stackrel{i_2}{\rule[0.5ex]{1em}{0.4pt}} \cdots \stackrel{i_k}{\rule[0.5ex]{1em}{0.4pt}} t.
$

\begin{definition}
The \textbf{cluster algebra} \(\mathcal{A} = \mathcal{A}(\mathbf{x}_{t_0}, B_{t_0})\) associated with an initial seed \((\mathbf{x}_{t_0}, B_{t_0})\) is the subalgebra of \(\mathcal{F}\) generated by all cluster variables from seeds obtained via arbitrary sequences of mutations starting from \((\mathbf{x}_{t_0}, B_{t_0})\).

Moreover, a cluster algebra is called of finite type if it has finitely many cluster variables, while a cluster algebra with only finite number of distinct exchange matrices is called of the finite mutation type.
\end{definition}

\begin{definition}
\label{def:cluster-pattern}
A collection of labeled seeds $\boldsymbol{\Sigma} = \{\Sigma_t = (\mathbf{x}_t, B_t)\}_{t \in \mathbb{T}_n}$ indexed by the $n$-regular tree $\mathbb{T}_n$ is called a \textbf{cluster pattern} or \textit{seed pattern} of rank $n$ if for any pair of vertices $t, t' \in \mathbb{T}_n$ that are $k$-adjacent, i.e., $\Sigma_{t'} = \mu_k(\Sigma_t)$.
\end{definition}

\subsection{Diagrams of exchange matrices}
In this section, we recall the diagrams of exchange matrices, which were introduced by Fomin and Zelevinsky in \cite{FZ03}.

\label{subsec:diagrams}
\begin{definition}
\label{def:diagram}
Let \( B = (b_{ij})_{i,j=1}^n \) be an \( n \times n \) skew-symmetrizable matrix. The \textit{diagram} \( \Gamma(B) \) associated with \( B \) is a weighted directed graph where the vertex set is \( \{1, \ldots, n\} \). A directed edge from vertex \( i \) to vertex \( j \) exists precisely when \( b_{ij} > 0 \), and such an edge is assigned a weight of \( |b_{ij}b_{ji}| \). We call \( B \) \textit{indecomposable} if its corresponding diagram \( \Gamma(B) \) is connected.
\end{definition}

\begin{definition}
\label{def:diagram-mutation}
For any vertex \( k \) in a diagram \( \Gamma := \Gamma(B) \) (where \( B \) is a skew-symmetrizable matrix), the mutation \( \mu_k(\Gamma) \) of \( \Gamma \) at \( k \) is a weighted directed graph constructed from \( \Gamma \) through the following steps:
\begin{itemize}
    \item \textbf{Edge reversal:} All edges incident to vertex \( k \) have their orientations reversed, with weights preserved.
    \item \textbf{Path composition:} For any pair of vertices \( i \) and \( j \) connected via a 2-edge path $i \to k \to j$ or $i \leftarrow k \leftarrow j$ in $\Gamma$:
    \begin{align*}
        \pm \sqrt{a} \pm \sqrt{a'} = \sqrt{bc}
    \end{align*}
    
    \begin{itemize}[label={}]
        \item $b$ = weight of edge $(i,k)$ or $(k,i)$, $c$ = weight of edge $(k,j)$ or $(j,k)$
        \item $a$ = weight of edge $(i,j)$ , $a'$ = new weight of edge $(i,j)$ 
        \item Signs determined by cycle orientations (The sign before \( a \) is ``+'' if $i \to k \to j$, and is ``-'' otherwise. And the sign before \( a' \) is opposite to that before \( a \) ) 
    \end{itemize}
    \item \textbf{Preservation:} All other edges and weights remain unchanged.
\end{itemize}
\textit{This diagram mutation rule encodes the signed adjacency changes during matrix mutation.}
\end{definition}

\begin{definition}
\label{def:sn-action}
Let \( \Sigma = (\mathbf{x}, B) \) be a labeled seed of rank \( n \), and let \( \Gamma(B) \) be the diagram corresponding to \( B \). For any \( \sigma \in S_n \):
\begin{itemize}
    \item The action on the seed is:
    \[
    \sigma (\Sigma) = (\sigma \mathbf{x}, \sigma (B)), \quad 
    \begin{cases} 
    x_i' = x_{\sigma^{-1}(i)} \\ 
    b_{ij}' = b_{\sigma^{-1}(i) \sigma^{-1}(j)}
    \end{cases}
    \]
    \item The action on the diagram $ \sigma(\Gamma(B)) $ relabels vertices via $\sigma$ while preserving edge weights and directions between corresponding vertices.
\end{itemize}
\textit{This group action formalizes vertex relabeling and is compatible with mutation operations.}
\end{definition}

Furthermore, for skew-symmetrizable matrices \( B \) and \( B' \), we write \( B \sim B' \) if there exists a \( \sigma \in S_n \) such that \( B' = \sigma(B) \).
In the following, $\Gamma(B)$ and  $\Gamma(B')$ are said to be \textbf{sign permutation equivalent} if $B' \sim \pm B $.

\begin{lemma}
\label{lem:sn-action-properties}
Let \( \Sigma = (\mathbf{x}, B) \) be a labeled seed of rank \( n \), let \( \Gamma(B) \) be the diagram of \( B \), and let \( 1 \leq k \leq n \). The following statements hold for any \( \sigma \in S_n \):
\begin{enumerate}
    \item \( \sigma(\Gamma(B)) = \Gamma(\sigma(B)) \) (diagram isomorphism)
    \item The actions of mutation and permutation commute: 
    \(
    \sigma \circ \mu_k = \mu_{\sigma(k)} \circ \sigma.
    \)
    That is:
    \[
    \sigma(\mu_k(\Sigma)) = \mu_{\sigma(k)}(\sigma(\Sigma)) \quad \text{and} \quad \sigma(\mu_k(\Gamma(B))) = \mu_{\sigma(k)}(\sigma(\Gamma(B))).
    \]
\end{enumerate}
\end{lemma}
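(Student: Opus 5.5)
The plan is a direct verification from the definitions, treating the two parts in order; the second part reduces to index bookkeeping once the first is available.

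For part (1), I will compare edges of $\sigma(\Gamma(B))$ and $\Gamma(\sigma(B))$. Write $B'=\sigma(B)$, so $b'_{ij}=b_{\sigma^{-1}(i)\sigma^{-1}(j)}$. By Definition~\ref{def:diagram}, $\Gamma(B')$ has an edge $i\to j$ exactly when $b'_{ij}>0$, that is, when $b_{\sigma^{-1}(i)\sigma^{-1}(j)}>0$. This is exactly the condition for an edge $\sigma^{-1}(i)\to\sigma^{-1}(j)$ in $\Gamma(B)$. The weight of this edge is
\[
|b'_{ij}b'_{ji}|=|b_{\sigma^{-1}(i)\sigma^{-1}(j)}\,b_{\sigma^{-1}(j)\sigma^{-1}(i)}|,
\]
which is the weight of the edge $\sigma^{-1}(i)\to\sigma^{-1}(j)$. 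Relabeling vertex $v$ as $\sigma(v)$ turns $\Gamma(B)$ into $\Gamma(B')$, which is the meaning of $\sigma(\Gamma(B))$ in Definition~\ref{def:sn-action}. I will also check that $\sigma(B)$ is again skew-symmetrizable, with skew-symmetrizer $\sigma(D)=\operatorname{diag}(d_{\sigma^{-1}(1)},\dots,d_{\sigma^{-1}(n)})$, so that the diagram is defined.

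For part (2), on the matrix level, set $k'=\sigma(k)$ and compare $(\mu_{k'}(\sigma B))_{ij}$ with $(\sigma(\mu_k B))_{ij}=(\mu_kB)_{\sigma^{-1}(i)\sigma^{-1}(j)}$. The case condition "$i=k'$ or $j=k'$" is equivalent to "$\sigma^{-1}(i)=k$ or $\sigma^{-1}(j)=k$". In that case both sides equal $-b_{\sigma^{-1}(i)\sigma^{-1}(j)}$. Otherwise, the left side is
\[
b'_{ij}+\tfrac12\bigl(|b'_{ik'}|b'_{k'j}+b'_{ik'}|b'_{k'j}|\bigr).
\]
Substituting $b'_{pq}=b_{\sigma^{-1}(p)\sigma^{-1}(q)}$ and $\sigma^{-1}(k')=k$ turns it into the mutation formula at $k$ evaluated at the pair $(\sigma^{-1}(i),\sigma^{-1}(j))$.

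For the cluster variables, the $i$-th entry of $\sigma(\mu_k\mathbf{x})$ is $(\mu_k\mathbf{x})_{\sigma^{-1}(i)}$. This equals $x_{\sigma^{-1}(i)}$ unless $i=k'$, and that is also the $i$-th entry of $\mu_{k'}(\sigma\mathbf{x})$ for $i\neq k'$. For $i=k'$, I will reindex the exchange relation of $\mu_{k'}(\sigma\Sigma)$: the product over $i$ with $b'_{ik'}>0$ of $(x_{\sigma^{-1}(i)})^{b'_{ik'}}$ becomes, via $m=\sigma^{-1}(i)$, the product over $m$ with $b_{mk}>0$ of $x_m^{b_{mk}}$. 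The negative monomial reindexes the same way. The two new variables therefore satisfy the same exchange relation with the same $x_k$, and so they coincide.

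The diagram statement follows by combining the matrix identity with part (1): $\Gamma(\sigma\mu_kB)=\Gamma(\mu_{\sigma(k)}\sigma B)$. Here $\mu_k(\Gamma(B))=\Gamma(\mu_kB)$ by the remark that diagram mutation encodes matrix mutation. The main obstacle is exactly this last point. The paper gives diagram mutation as a separate rule rather than defining it through matrices, so I must either take the compatibility $\Gamma(\mu_kB)=\mu_k\Gamma(B)$ as given (from \cite{FZ03}) or argue directly. The direct argument is that the rule in Definition~\ref{def:diagram-mutation} refers only to the weights, the orientations, and the distinguished vertex $k$. Relabeling all vertices by $\sigma$ therefore carries the configuration at $k$ to the one at $\sigma(k)$ and transforms it identically. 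I would present this intrinsic argument as the primary proof and cite the compatibility as a check.
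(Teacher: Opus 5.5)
The paper gives no proof of this lemma. It is recalled in the preliminaries as a standard fact, and the compatibility between diagram mutation and matrix mutation is only asserted in the one-line remark after Definition~\ref{def:diagram-mutation}.

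Your direct verification is correct and supplies what the paper leaves out:
\begin{itemize}
\item for (1), the edge and weight comparison;
\item the entrywise check of $\sigma(\mu_k B)=\mu_{\sigma(k)}(\sigma B)$, using $\sigma^{-1}(\sigma(k))=k$;
\item the reindexing $m=\sigma^{-1}(i)$ of the two exchange monomials, after which the two new variables agree because each is the same binomial divided by $x_k\neq 0$ in $\mathcal{F}$.
\end{itemize}

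For the diagram identity in (2), making the intrinsic argument primary is the better choice. The rule of Definition~\ref{def:diagram-mutation} is phrased only in terms of weights, orientations and the mutated vertex. So $\sigma$ carries the $2$-paths $i\to k\to j$ of $\Gamma$ bijectively onto the $2$-paths $\sigma(i)\to\sigma(k)\to\sigma(j)$ of $\sigma(\Gamma)$, with identical data. This gives $\sigma(\mu_k\Gamma)=\mu_{\sigma(k)}(\sigma\Gamma)$ for every weighted diagram, not only those of the form $\Gamma(B)$, and it needs no external input.

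The route through $\Gamma(\mu_kB)=\mu_k(\Gamma(B))$ is only as strong as that compatibility statement. In the skew-symmetrizable case it is a genuine computation with the $\pm\sqrt{a}\pm\sqrt{a'}=\sqrt{bc}$ rule, and the paper does not establish it. Keeping it as a cross-check, as you propose, is appropriate.
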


\subsection{Cluster automorphism}
In this section, we recall the approach introduced by Fu and Liang \cite{FuLiang} for calculating cluster automorphism groups of cluster algebras.

Firstly, we recall the definition of cluster automorphisms, which were introduced by Assem, Schiffler and Shamchenko in \cite{I. Assem}.

\begin{definition}[\cite{I. Assem}]
\label{def:cluster-automorphism}
Let $\mathcal{A}=\mathcal{A}(\Sigma) :=\mathcal{A} (\mathbf{x}, B)$ be a cluster algebra, and $f: \mathcal{A} \to \mathcal{A}$ be an automorphism of $\mathbb{Z}$-algebra. $f$ is called a \textbf{cluster automorphism} of $\mathcal{A}$ if there exist another seed $\Sigma' := (\mathbf{x}', B')$ of $\mathcal{A}$ and a permutation $\sigma \in S_n$, such that:
\begin{enumerate}[label=(\arabic*)]
    \item $\sigma f(\mathbf{x}) = \mathbf{x}',$
    \item $\sigma(f(\mu_k(\mathbf{x}))) = \mu_{\sigma(k)}(\mathbf{x}')$ for $k = 1, \ldots, n.$
\end{enumerate}
\end{definition}

In fact, the condition $(2)$ is superfluous that was conjecture by Chang and Schiffler and proved in~\cite{li}.

In fact, every cluster automorphism of \( \mathcal{A} \) can be written as a quadruple  
\[
f = (\Sigma_1, \Sigma_2, \sigma, \varepsilon),
\]  
where \( \Sigma_1 = (\mathbf{x}_1, B_1) \) and \( \Sigma_2 = (\mathbf{x}_2, B_2) \) are labeled seeds, \( \sigma \in S_n \), and \( \varepsilon \in \{ \pm \} \) such that \( \sigma B_1 = \varepsilon B_2 \). The automorphism \( f \) is uniquely determined by the condition  
$\sigma f(\mathbf{x}_1) = \mathbf{x}_2$,
which was proven in \cite{FuLiang}.

Denote the set of all cluster automorphisms of $\mathcal{A}$ by $\operatorname{Aut}(\mathcal{A})$ which forms a group under composition, cf.\ \cite{I. Assem}.

Recall that, we have, fixed a cluster pattern $\boldsymbol{\Sigma} = \{\Sigma_t = (\mathbf{x}_t, B_t)\}_{t \in \mathbb{T}_n}$ with root vertex $t_0$, for any two vertices $t, t' \in \mathbb{T}_n$, let $p(t, t')$ be the unique path in $\mathbb{T}_n$ from $t$ to $t'$. If  \( p(t, t') = t \stackrel{i_1}{\rule[0.5ex]{1em}{0.4pt}} \cdot \stackrel{i_2}{\rule[0.5ex]{1em}{0.4pt}} \cdot \cdots \cdot \stackrel{i_m}{\rule[0.5ex]{1em}{0.4pt}} t'\), for any \( \sigma \in S_n \), we denote \[ \mu_{p(t,t')} := \mu_{i_m} \cdots \mu_{i_1}  \text{ and }  \mu_{\sigma(p(t,t'))} := \mu_{\sigma(i_m)} \cdots \mu_{\sigma(i_1)}. \] 

For a fixed vertex $s \in \mathbb{T}_n$, we define the weight of $p(t, t')$ with respect to $s$ as follows
\[
\mathbf{w}_s(t, t') := \# \left\{ t'' \in \mathbb{T}_n \mid \text{the path } p(t, t') \text{ passes through the vertex } t'' \text{ and } B_{t''} \sim \pm B_s \right\}.
\]

For any positive integer $m$, we define
\[
\mathcal{P}_m (t_0) := \{ p(t_0, t) \mid \mathbf{w}_{t_0} (t_0, t) = m + 1 \text{ and } B_t \sim \pm B_{t_0} \}.
\]

For each path $p(t_0, t) \in \mathcal{P}_m (t_0)$, we fix a permutation $\sigma_t \in S_n$ and $\varepsilon_t \in \{\pm\}$ such that $\sigma_t (B_{t_0}) = \varepsilon_t B_t$, and denote by $f_{p(t_0, t)} := (\Sigma_{t_0}, \Sigma_t, \sigma_t, \varepsilon_t)$ the associated cluster automorphism.

Furthermore, for a certain cluster automorphism $f=(\Sigma_{t_0}, \Sigma_t, \sigma, \varepsilon)$, by definition, we know that there is a unique reduced sequence \( i_k \cdots i_1 \) such that \( t = \mu_{i_k} \cdots \mu_{i_1}(t_0) \). So, we denote the cluster automorphism as  
\[
g_{i_k \cdots i_1}^{\sigma, \varepsilon} \coloneqq (\Sigma_{t_0}, \Sigma_t, \sigma, \varepsilon)
\text{ or }
\psi_\sigma^\varepsilon \coloneqq (\Sigma_{t_0}, \Sigma_{t_0}, \sigma, \varepsilon).
\]

Let $m$ be a positive integer, define
\[
H_m (t_0) := \{ f_{p(t_0, t)} \mid p(t_0, t) \in \mathcal{P}_m (t_0) \} \subset G_m (t_0),
\]
where
\[
G_m (t_0) := \left\{ f \in \operatorname{Aut}(\mathcal{A}) \middle| \begin{array}{l} \exists s \in \mathbb{T}_n, \sigma \in S_n \text{ and } \varepsilon \in \{\pm\} \\ \text{such that } \mathbf{w}_{t_0} (t_0, s) = m + 1 \text{ and } \\ f = (\Sigma_{t_0}, \Sigma_s, \sigma, \varepsilon) \end{array} \right\} .
\]

Assume that there is a (marked) vertex $t_\diamond$ satisfyies 

\begin{center}
\textbf{Assumption ($\diamond$)}: there exists a vertex $t_\diamond \in \mathbb{T}_n$ such that $B_{t_\diamond} \not\sim \pm B_{t_0}$ and $\mathbf{w}_{t_0} (t_0, t_\diamond) = \mathbf{w}_{t_\diamond} (t_0, t_\diamond) = 1$.
\end{center}

For any $n,m \geq 0$, we define
\begin{align*}
\mathcal{P}_{n,m}^{t_\diamond} (t_0) &:= \{ p(t_0, t) \in \mathcal{P}_n (t_0) \mid \mathbf{w}_{t_\diamond} (t_0, t) = m \} \subset \mathcal{P}_n (t_0), \\
H_{n,m}^{t_\diamond} (t_0) &:= \{ f_{p(t_0,t)} \mid p(t_0, t) \in \mathcal{P}_{n,m}^{t_\diamond} (t_0) \} \subset H_n (t_0).
\end{align*}

\begin{theorem}[\cite{FuLiang}]
\label{thm:refined-generator2}
The cluster automorphism group $\operatorname{Aut}(\mathcal{A})$ is generated by $G_0 (t_0) \cup H_1 (t_0)$. Forthermore, $\operatorname{Aut}(\mathcal{A})$ is generated by $G_0 (t_0) \cup H_{1,0}^{t_\diamond} (t_0) \cup H_{1,1}^{t_\diamond} (t_0) \cup H_{1,2}^{t_\diamond} (t_0)$.
\end{theorem}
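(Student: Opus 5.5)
The plan is to argue by induction on the weight of the path from $t_0$ to the seed carrying the automorphism. By the quadruple description, every $f\in\operatorname{Aut}(\mathcal{A})$ can be written as $f=(\Sigma_{t_0},\Sigma_t,\sigma,\varepsilon)$ with $\sigma B_{t_0}=\varepsilon B_t$. Along $p(t_0,t)$, both endpoints satisfy $B\sim\pm B_{t_0}$, so $\mathbf{w}_{t_0}(t_0,t)=m+1$ for some $m\ge 0$, and we induct on $m$.

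If $m=0$, then $t=t_0$ and $f\in G_0(t_0)$. If $m\ge 1$, let $s$ be the first vertex after $t_0$ on $p(t_0,t)$ with $B_s\sim\pm B_{t_0}$, so that $p(t_0,s)\in\mathcal{P}_1(t_0)$. Write $h=f_{p(t_0,s)}=(\Sigma_{t_0},\Sigma_s,\sigma_s,\varepsilon_s)\in H_1(t_0)$. We then show that $h^{-1}f$ is of the form $(\Sigma_{t_0},\Sigma_{t'},\sigma',\varepsilon')$ with $\mathbf{w}_{t_0}(t_0,t')=m$.

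The key tool is the transport of paths. Since $h$ sends $\Sigma_{t_0}$ to $\Sigma_s$ up to $\sigma_s$ and $\varepsilon_s$, Lemma \ref{lem:sn-action-properties} shows that $h^{-1}$ sends the seed reached from $s$ along a sequence $i_1,\dots,i_k$ to the seed reached from $t_0$ along $\sigma_s^{-1}(i_1),\dots,\sigma_s^{-1}(i_k)$. Sign changes $B\mapsto -B$ commute with mutation, so the sign causes no trouble. Hence $h^{-1}f=(\Sigma_{t_0},\Sigma_{t'},\sigma',\varepsilon')$, where $p(t_0,t')$ is the relabelled copy of $p(s,t)$. Relabelling preserves the relation $\sim\pm B_{t_0}$, because $B_s\sim\pm B_{t_0}$ and this relation is invariant under permutation and sign. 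Therefore $\mathbf{w}_{t_0}(t_0,t')=\mathbf{w}_{t_0}(s,t)=m$, and induction gives the first statement.

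For the refinement, we keep the same induction but choose the splitting more carefully, so that every generator used lies in $H_{1,j}^{t_\diamond}(t_0)$ with $j\le 2$. Fix $p\in\mathcal{P}_1(t_0)$ ending at $t$, and suppose $\mathbf{w}_{t_\diamond}(t_0,t)=j\ge 3$. We factor $f_p$ using "detours through $t_\diamond$". Inside the segment $p(t_0,t)$ there is no interior vertex equivalent to $B_{t_0}$. Take the consecutive visits of the path to vertices $u$ with $B_u\sim\pm B_{t_\diamond}$. The path from $t_0$ to $t_\diamond$ contains no vertex equivalent to $B_{t_0}$ apart from $t_0$, and no vertex equivalent to $B_{t_\diamond}$ apart from $t_\diamond$; this is Assumption ($\diamond$). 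Using this path and the transported copies of its reverse, we insert round trips $u\to(\text{copy of }t_0)\to u$ at the intermediate $t_\diamond$-type vertices. This rewrites $f_p$ as a product of automorphisms attached to paths of the shape
\[
t_0 \rightsquigarrow t_\diamond\text{-type vertex} \rightsquigarrow t_\diamond\text{-type vertex} \rightsquigarrow B_{t_0}\text{-type vertex}.
\]
Each factor path has $t_0$-weight $2$ and $t_\diamond$-weight at most $2$. Any extra factors created by passing through a copy of $t_0$ are then pulled back into $G_0(t_0)$ or the $H_{1,\le 2}^{t_\diamond}$ pieces using the same relabelling argument as in the first part.

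The main obstacle is the bookkeeping in this step. One must check that each inserted round trip really yields a path whose only $B_{t_0}$-type vertices are its endpoints, and whose $t_\diamond$-weight is at most $2$. This should follow from the hypothesis $\mathbf{w}_{t_0}(t_0,t_\diamond)=\mathbf{w}_{t_\diamond}(t_0,t_\diamond)=1$, applied to every transported copy of $p(t_0,t_\diamond)$, via Lemma \ref{lem:sn-action-properties}. Finally, since $\mathbf{w}_{t_\diamond}(t_0,t)$ is finite, one inducts on it so that the number of $t_\diamond$-visits strictly decreases at each step.
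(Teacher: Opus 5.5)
Your proposal is correct and follows essentially the same route as the paper. The paper cites this result from Fu--Liang and reproduces its argument, in multi-marker form, in Lemma~\ref{lemma111}: peel off the first return to a $B_{t_0}$-type vertex and transport the rest of the path back to $t_0$, then break paths with $\mathbf{w}_{t_\diamond}\ge 3$ by inserting, at a $t_\diamond$-type vertex, a detour along a transported copy of $p(t_\diamond,t_0)$. The paper inserts one detour at the second $t_\diamond$-visit and inducts on $\mathbf{w}_{t_\diamond}$, which is equivalent to your all-at-once insertion. The one point you leave implicit is that the concatenated walks must be reduced to paths in $\mathbb{T}_n$; since the reduced path lies inside the walk, both weights can only drop (a factor may collapse into $G_0(t_0)$), so this is harmless.
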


\begin{example}
\label{sec:example}
Let $\mathcal{A}$ be the cluster algebra of type $\mathbb{D}_4$ and the associated exchange matrix at vertex $t_0$ is

\[
B_{t_0} = 
\begin{bmatrix}
0 & 1 & 0 & 0 \\
-1 & 0 & -1 & -1 \\
0 & 1 & 0 & 0 \\
0 & 1 & 0 & 0
\end{bmatrix}.
\]

By mutating in directions 1 and 2, we obtain the mutation subgraph of \(\Gamma_0 (=\Gamma(B_{\{t_0\}}))\), cf. Figure~\ref{fig:4-node-added} where it is easy to complete the mutation subgraphs in other directions by symmetry.

Clearly, \( t_0 \stackrel{}{\rule[0.5ex]{1em}{0.4pt}} \mu_2(t_0) \in \mathcal{P}_1(t_0)  \), then $f_{p(t_0,\mu_2(t_o))} \in H_1(t_0) $.
From Figure~\ref{fig:4-node-added}, all green, red and black vertices satisfy the Assumptions($\diamond$). If we let \( t_{\diamond} = \mu_1(t_0) \),
then  \[ t_0 \stackrel{1}{\rule[0.5ex]{1em}{0.4pt}} \cdot \stackrel{2}{\rule[0.5ex]{1em}{0.4pt}}\cdot \stackrel{3}{\rule[0.5ex]{1em}{0.4pt}} \cdot \stackrel{4}{\rule[0.5ex]{1em}{0.4pt}}  
\cdot \stackrel{2}{\rule[0.5ex]{1em}{0.4pt}} \cdot \stackrel{1}{\rule[0.5ex]{1em}{0.4pt}} t'\ (i.e.\ t'=\mu_{124321(t_0)}) \] 
belongs to \( P_{1,2}^{t_{\diamond}}(t_0) \) and \( f_{P(t_0, t')} \in H_{1,2}^{t_\diamond}(t_0)\). Note that there are three green vertices in the path, then cluster automorphisms determined by paths through \( t_0 \stackrel{1}{\rule[0.5ex]{1em}{0.4pt}} \cdot \stackrel{3}{\rule[0.5ex]{1em}{0.4pt}} \cdot \stackrel{1}{\rule[0.5ex]{1em}{0.4pt}}\) can be deleted from the generator set.

To calculate the cluster automorphisms is very difficult and the program may get stuck in an infinite loop for the path through  \[ t_0 \stackrel{1}{\rule[0.5ex]{1em}{0.4pt}} \cdot \stackrel{2}{\rule[0.5ex]{1em}{0.4pt}}\cdot \stackrel{3}{\rule[0.5ex]{1em}{0.4pt}} \cdot \stackrel{4}{\rule[0.5ex]{1em}{0.4pt}} \cdot \stackrel{3}{\rule[0.5ex]{1em}{0.4pt}} \cdot \stackrel{4}{\rule[0.5ex]{1em}{0.4pt}} \cdots, \] when we mark a green vertex.

If we mark a red or black vertex, then the similar problems will occur as above case.
No matter which marker we choose, the process cannot terminate within a finite number of steps for $\mathbb{D}_4$, and so we need to improve the algorithms.

\begin{figure}[H]
\centering
\adjustbox{max width=\textwidth}{
\begin{tikzpicture}[>=Stealth, line width=0.8pt, font=\small]

\def\drawNode#1#2#3#4#5#6#7#8{ 
  \node[font=\bfseries] at (#1, #2+1.5) {4};   
  \node[font=\bfseries] at (#1, #2)     {2};   
  \node[font=\bfseries] at (#1-1.5, #2-1.5) {1}; 
  \node[font=\bfseries] at (#1+1.5, #2-1.5) {3}; 
  
  \ifnum#3=1  \draw[->, #7] (#1, #2+1.2) -- (#1, #2+0.3) node[midway, left] {};\fi
  \ifnum#3=-1 \draw[->, #7] (#1, #2+0.3) -- (#1, #2+1.2) node[midway, left] {};\fi
  
  \ifnum#4=1  \draw[->, #7] (#1-0.3, #2-0.3) -- (#1-1.2, #2-1.2) node[midway, below left] {};\fi
  \ifnum#4=-1 \draw[->, #7] (#1-1.2, #2-1.2) -- (#1-0.3, #2-0.3) node[midway, above left] {};\fi
  
  \ifnum#5=1  \draw[->, #7] (#1+0.3, #2-0.3) -- (#1+1.2, #2-1.2) node[midway, below right] {};\fi
  \ifnum#5=-1 \draw[->, #7] (#1+1.2, #2-1.2) -- (#1+0.3, #2-0.3) node[midway, above right] {};\fi
  
  \ifnum#6=1  \draw[->, #7] (#1-1.2, #2-1.5) -- (#1+1.2, #2-1.5) node[midway, below] {};\fi
  \ifnum#6=-1 \draw[->, #7] (#1+1.2, #2-1.5) -- (#1-1.2, #2-1.5) node[midway, below] {};\fi
  
  \ifnum#8=1  
    \draw[->, #7] (#1-0.2, #2+1.2) -- (#1-1.4, #2-1.2) node[midway, above left] {};
  \fi
  \ifnum#8=-1 
    \draw[->, #7] (#1-1.4, #2-1.2) -- (#1-0.2, #2+1.2) node[midway, below right] {};
  \fi
}

\node[font=\bfseries\large] at (-2, 4.5) {$t_0:$};

\drawNode{0}{4}{1}{-1}{-1}{0}{blue}{0}
\draw (1.5,4.5) -- (3.5,4.5) node[midway,above] {$\mu_1$};
\drawNode{5}{4}{1}{1}{-1}{0}{green}{0}
\draw (6.5,4.5) -- (8.5,4.5) node[midway,above] {$\mu_3$};
\drawNode{10}{4}{1}{1}{1}{0}{green}{0}
\draw (11.5,4.5) -- (13.5,4.5) node[midway,above] {$\mu_1$};
\drawNode{15}{4}{1}{-1}{1}{0}{green}{0}

\drawNode{0}{-1}{-1}{1}{1}{0}{blue}{0}
\draw (0,1.2) -- (0,2.4) node[midway,right] {$\mu_2$};
\drawNode{5}{-1}{-1}{0}{-1}{1}{black}{1}
\draw (6.5,-1) -- (8.5,-1) node[midway,above] {$\mu_3$};
\draw (6.5,1.7) -- (8.5,0.2) node[midway,right] {$\mu_2$};
\drawNode{10}{-1}{-1}{-1}{1}{-1}{red}{1}
\drawNode{15}{-1}{-1}{-1}{1}{0}{green}{0}
\draw (15,1.2) -- (15,2.4) node[midway,right] {$\mu_4$};
\draw (15,-2.8) -- (15,-4) node[midway,left] {$\mu_3$};

\drawNode{0}{-6}{1}{1}{-1}{1}{red}{-1}
\draw (1.5,-4.8) -- (3.5,-3.3) node[midway,left] {$\mu_4$};
\draw (1.5,-6) -- (3.5,-6) node[midway,above] {$\mu_3$};
\drawNode{5}{-6}{1}{0}{1}{-1}{black}{-1}
\draw (6.5,-6) -- (8.5,-6) node[midway,above] {$\mu_4$};
\drawNode{10}{-6}{-1}{-1}{1}{-1}{red}{1}
\drawNode{15}{-6}{-1}{-1}{-1}{0}{green}{0}

\drawNode{0}{-11}{-1}{-1}{1}{0}{green}{0}
\draw (0,-9) -- (0,-8) node[midway,right] {$\mu_2$};

\drawNode{5}{-11}{-1}{1}{1}{0}{blue}{0}
\draw  (1.5,-11) -- (3.5,-11)  node[midway,above] {$\mu_3$};

\end{tikzpicture}
}
\caption{Mutation subgraph of $\Gamma_0$ of $\mathbb{D}_4.$ (All the diagrams with same color is sign permutation equivalent.)}
\label{fig:4-node-added}
\end{figure}

\end{example}

\newpage

\section{Marked-vertex strategy and algorithms}
\label{s4}
In this section, let \( \mathcal{A} =\mathcal{A} (\mathbf{x}, B)\) be a cluster algebra of rank \( n \) with a fixed cluster pattern \( \Sigma \). Assume that the exchange matrices of \( \mathcal{A} \) are indecomposable and fix a skew-symmetrizer \( D = \operatorname{diag}\{d_1, \ldots, d_n\} \).

\subsection{Multiple marked vertices strategy}

Inspired by the Assumption($\diamond$), we denote $t_0 \triangleq t_{\diamond_0}$, $t_{\diamond} \triangleq t_{\diamond_1} (n >2)$ and fix multiple (marked) vertices $t_{\diamond_1}, t_{\diamond_2} \cdots t_{\diamond_n}$ satisfying assumption ($\diamond_n $) as follows

\begin{center}
\textbf{Assumption ($\diamond_n $)}: there exist vertices $t_{\diamond_1}, t_{\diamond_2} \cdots t_{\diamond_n}  \in \mathbb{T}_n$ such that $B_{t_{\diamond_i}}  \not\sim \pm B_{t_{\diamond_j}} $ and $\mathbf{w}_{t_{\diamond_i}} (t_0, t_{\diamond_i}) =\mathbf{w}_{t_0} (t_0, t_{\diamond_i}) = 1$ for any $0 \leq i,j \leq n $,  and 
if $i < j$: $\mathbf{w}_{t_{\diamond_i}} (t_0, t_{\diamond_j}) = 0.$
\end{center}

\begin{lemma}[\cite{FuLiang}]\label{rem:3.10}
Let \( f = (\Sigma_1, \Sigma_2, \sigma, \varepsilon) \) be a cluster automorphism of \( \mathcal{A} \).
\begin{enumerate}[label=(\arabic*)] 
    \item For any mutation sequence \( \mu_{i_k}, \ldots, \mu_{i_1} \), the following equality holds:
    \[
    (\Sigma_1, \Sigma_2, \sigma, \varepsilon) = \bigl( \mu_{i_k} \cdots \mu_{i_1} (\Sigma_1), \mu_{\sigma(i_k)} \cdots \mu_{\sigma(i_1)} (\Sigma_2), \sigma, \varepsilon \bigr).
    \]
    \item Let \( g = (\Sigma_2, \Sigma_3, \tau, \varepsilon') \) be another cluster automorphism of \( \mathcal{A} \). Then:
    \[
    (\Sigma_2, \Sigma_3, \tau, \varepsilon') \circ (\Sigma_1, \Sigma_2, \sigma, \varepsilon) = (\Sigma_1, \Sigma_3, \tau\sigma, \varepsilon'\varepsilon).
    \]
    \item If f is a cluster automorphism determined by the path \( p(t_1, t_2) \), we have the following factorization:
    \[
    f = f_{p(t_1, t_2)} \circ (\Sigma_{t_0}, \Sigma_{t_0}, \sigma_t^{-1} \sigma, \varepsilon_t \varepsilon), \text{where } f_{p(t_1,t_2)}:=  (\Sigma_{t_1}, \Sigma_{t_2}, \sigma_t^{-1} \sigma, \varepsilon_t \varepsilon_t).
    \]
\end{enumerate}
\end{lemma}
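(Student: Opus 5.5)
The plan is to work throughout with the characterization recalled after Definition~\ref{def:cluster-automorphism}. A quadruple $(\Sigma_1,\Sigma_2,\sigma,\varepsilon)$ with $\sigma B_1=\varepsilon B_2$ determines a unique cluster automorphism $f$, namely the one with $\sigma f(\mathbf{x}_1)=\mathbf{x}_2$. So to prove any of the three identities, it suffices to show two things: both sides are cluster automorphisms, and they agree on one cluster, or equivalently satisfy the same defining relation. Since an algebra automorphism of $\mathcal{A}$ is determined by its values on a transcendence basis (a cluster generates the fraction field $\mathcal{F}$), agreeing on one cluster forces equality.

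For (1), I argue by induction on $k$, so the whole content is the case of a single mutation $\mu_i$. Write $\Sigma_1'=\mu_i(\Sigma_1)$ and $\Sigma_2'=\mu_{\sigma(i)}(\Sigma_2)$.
\begin{itemize}
\item First, the quadruple is admissible: by Lemma~\ref{lem:sn-action-properties}(2), $\sigma(\mu_i(B_1))=\mu_{\sigma(i)}(\sigma B_1)=\mu_{\sigma(i)}(\varepsilon B_2)=\varepsilon\,\mu_{\sigma(i)}(B_2)$. Here I use that matrix mutation commutes with $B\mapsto -B$, which is immediate from the mutation formula.
\item Next, apply the $\mathbb{Z}$-algebra map $f$ to the exchange relation $x_i x_i'=\prod x_j^{[b_{ji}]_+}+\prod x_j^{[-b_{ji}]_+}$ in $\Sigma_1$. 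Permuting indices by $\sigma$, the right-hand side becomes the exchange binomial of $\Sigma_2$ at $\sigma(i)$. When $\varepsilon=-$ the two monomials are swapped, which leaves the sum unchanged.
\item Hence $\sigma f(x_i')$ equals the new variable $x'_{\sigma(i)}$ of $\Sigma_2'$. All other variables are untouched, so $\sigma f(\mathbf{x}_1')=\mathbf{x}_2'$, and uniqueness gives (1).
\end{itemize}
I expect this index bookkeeping to be the main obstacle. In particular, the convention $x'_i=x_{\sigma^{-1}(i)}$ of Definition~\ref{def:sn-action} has to be matched carefully with the meaning of ``$\sigma f(\mathbf{x})$'', and the sign $\varepsilon$ must be handled through the symmetry of the exchange binomial.

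For (2), admissibility is direct: $\tau\sigma B_1=\tau(\varepsilon B_2)=\varepsilon\varepsilon' B_3$. For the defining relation, I compute
$\tau\sigma\,(g\circ f)(\mathbf{x}_1)=\tau\, g(\sigma f(\mathbf{x}_1))=\tau g(\mathbf{x}_2)=\mathbf{x}_3$.
This uses that applying a permutation of positions commutes with applying an algebra map entrywise, i.e.\ $g(\sigma\mathbf{y})=\sigma g(\mathbf{y})$. Uniqueness then finishes (2).

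For (3), I take $f$ to be determined by $p(t_1,t_2)$ and let $\sigma_t,\varepsilon_t$ be the fixed data attached to the chosen representative path. The approach is to reduce to (2) by writing $f$ as a composite in which the middle seed is matched. Concretely:
\begin{itemize}
\item Use (1) to transport the quadruple of $f$ along $p(t_0,t_1)$. This rewrites $f$ with first seed $\Sigma_{t_0}$ and second seed equal to $\mu_{\sigma(p(t_0,t_1))}(\Sigma_{t_2})$.
\item Apply (2) with the middle seed $\Sigma_{t_0}$. This expresses $f$ as $f_{p(t_1,t_2)}$ composed with $\psi^{\varepsilon_t\varepsilon}_{\sigma_t^{-1}\sigma}=(\Sigma_{t_0},\Sigma_{t_0},\sigma_t^{-1}\sigma,\varepsilon_t\varepsilon)$.
\item The permutations multiply as in (2): $\sigma_t\cdot\sigma_t^{-1}\sigma=\sigma$. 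The signs multiply using $\varepsilon_t^2=+$.
\item Check that both factors are admissible quadruples, i.e.\ $\sigma_t^{-1}\sigma B_{t_0}=\varepsilon_t\varepsilon B_{t_0}$, which follows from $\sigma_tB_{t_0}=\varepsilon_tB_t$ together with the corresponding relation for $f$.
\end{itemize}
The one point to verify explicitly is that the statement's abbreviated notation $f_{p(t_1,t_2)}$ really denotes the automorphism with the normalized permutation and sign, so that the composition rule (2) matches the displayed formula.
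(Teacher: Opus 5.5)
The paper does not prove this lemma; it is quoted from \cite{FuLiang}, so there is no in-paper argument to compare with. Your proofs of (1) and (2) are the standard ones and are correct. They rest on four points:
\begin{enumerate}
\item admissibility, from Lemma~\ref{lem:sn-action-properties}(2) together with $\mu_k(-B)=-\mu_k(B)$;
\item the one-step case of (1), obtained by applying $f$ to the exchange relation, where $\varepsilon=-$ only swaps the two monomials;
\item the left-action identity $\tau(\sigma\mathbf{y})=(\tau\sigma)\mathbf{y}$, used for (2);
\item uniqueness of the automorphism attached to an admissible quadruple, used in each part.
\end{enumerate}

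For (3), do not present the last step as ``checking notation''. The printed identity, with $f_{p(t_1,t_2)}=(\Sigma_{t_1},\Sigma_{t_2},\sigma_t^{-1}\sigma,\varepsilon_t\varepsilon_t)$, is not meaningful as written. Already for $t_1=t_0$, reading $t=t_2$, this quadruple would require $\sigma_t^{-1}\sigma B_{t_0}=B_{t_2}$, whereas in fact $\sigma_t^{-1}\sigma B_{t_0}=\varepsilon_t\varepsilon B_{t_0}$. Moreover, composing it via (2) would give the permutation $(\sigma_t^{-1}\sigma)^2$ rather than $\sigma$.

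What your argument proves, and what you should state, is the corrected version. Suppose $f=(\Sigma_{t_1},\Sigma_{t_2},\sigma,\varepsilon)$.
\begin{itemize}
\item Apply (1) with the labels of $p(t_1,t_0)$, not $p(t_0,t_1)$ as you wrote, since the sequence must carry $\Sigma_{t_1}$ back to $\Sigma_{t_0}$. This gives $f=(\Sigma_{t_0},\Sigma_t,\sigma,\varepsilon)$ with $\Sigma_t=\mu_{\sigma(p(t_1,t_0))}(\Sigma_{t_2})$, so $B_t\sim\pm B_{t_0}$.
\item Then (2) gives $f=f_{p(t_0,t)}\circ(\Sigma_{t_0},\Sigma_{t_0},\sigma_t^{-1}\sigma,\varepsilon_t\varepsilon)$ with $f_{p(t_0,t)}=(\Sigma_{t_0},\Sigma_t,\sigma_t,\varepsilon_t)$.
\end{itemize}
Note that the first factor is attached to $p(t_0,t)$, not to $p(t_1,t_2)$.

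This is exactly the form in which (3) is used in the proof of Lemma~\ref{lemma111}. There, each automorphism of the form $(\Sigma_{t_0},\Sigma_s,\cdot,\cdot)$ is written as an element of $H_1$ composed with an element of $G_0(t_0)$. With that restatement, your argument for (3) is complete.
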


For convenience, we give following notations where $n $ is nonnegative integer:
\begin{align*}
 \mathcal{P}_1(i_1,i_2,\cdots,i_n)(t_0) &:= \{ p(t_0, t) \in \mathcal{P}_{q}(i_1,i_2,\cdots,i_{n-1}) (t_0) \mid \mathbf{w}_{t_{\diamond_{k}}} (t_0, t) = i_k(k \in [n]) \} ,\\
H _1(i_1,i_2,\cdots,i_n)(t_0)  &:= \{ f_{p(t_0,t)} \mid p(t_0, t) \in \mathcal{P}_1(i_1,i_2,\cdots,i_n)(t_0)  \}, \\
G_1(i_1,i_2,\cdots,i_n)(t_0)   &:= \left\{ f \in \operatorname{Aut}(\mathcal{A}) \middle| \begin{array}{l} \exists p(t_0, t) \in \mathcal{P}_1(i_1,i_2,\cdots,i_n)(t_0), \sigma \in S_n , \\ \varepsilon \in \{\pm\} \text{ such that } f = (\Sigma_{t_0}, \Sigma_t, \sigma, \varepsilon) \end{array} \right\}.
\end{align*}

Clearly, $G_1(i_1,i_2,\cdots,i_{n+1})(t_0) \subset G_1(i_1,i_2,\cdots,i_n)(t_0)$.

\begin{lemma}\label{lemma111}
For any automorphism \( f \in G_{1}(i_1, \dots, i_{n+1})(t_0) \), \( f \) is the finite product of elements from sets \( G_0(t_0) \) and \( \bigcup_{\ell \in \{0,1,2\}} H_1(i_1, \dots, i_n, \ell)(t_0) \).
\end{lemma}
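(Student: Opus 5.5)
We mimic the Fu--Liang argument behind Theorem~\ref{thm:refined-generator2}, with the new marked vertex $t_{\diamond_{n+1}}$ in the role of $t_\diamond$. Let $f=(\Sigma_{t_0},\Sigma_t,\sigma,\varepsilon)\in G_1(i_1,\dots,i_{n+1})(t_0)$ with $p(t_0,t)\in\mathcal{P}_1(i_1,\dots,i_{n+1})(t_0)$, and set $m=\mathbf{w}_{t_{\diamond_{n+1}}}(t_0,t)$. We induct on $m$.

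\textbf{Small $m$.} If $m\le 2$, then $p(t_0,t)\in\mathcal{P}_1(i_1,\dots,i_n,m)(t_0)$. The fixed generator $f_{p(t_0,t)}=(\Sigma_{t_0},\Sigma_t,\sigma_t,\varepsilon_t)$ lies in $H_1(i_1,\dots,i_n,m)(t_0)$. By Lemma~\ref{rem:3.10}(2),
\[
f=f_{p(t_0,t)}\circ(\Sigma_{t_0},\Sigma_{t_0},\sigma_t^{-1}\sigma,\varepsilon_t\varepsilon),
\]
and the second factor lies in $G_0(t_0)$. This settles the case.

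\textbf{Step $m\ge 3$.} Assumption ($\diamond_n$) gives $\mathbf{w}_{t_{\diamond_{n+1}}}(t_0,t_{\diamond_{n+1}})=1$ and $B_{t_{\diamond_{n+1}}}\not\sim\pm B_{t_0}$. So, walking along $p(t_0,t)$, we can choose a vertex $s$ on the path such that:
\begin{itemize}
\item $s$ is preceded by exactly two vertices equivalent to $B_{t_{\diamond_{n+1}}}$, and
\item $B_s\sim\pm B_{t_0}$, which is possible because $t_0$-equivalent vertices separate consecutive occurrences; otherwise the pattern of the path would contradict $\mathbf{w}_{t_0}(t_0,t)=2$ together with the weight conditions.
\end{itemize}
Choose $\tau,\eta$ with $\tau B_{t_0}=\eta B_s$, and factor
\[
f=(\Sigma_s,\Sigma_t,\sigma\tau^{-1},\varepsilon\eta)\circ(\Sigma_{t_0},\Sigma_s,\tau,\eta).
\]
The second factor has path $p(t_0,s)$ with $\mathbf{w}_{t_{\diamond_{n+1}}}=2$, so it is covered by the base case. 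For the first factor, apply Lemma~\ref{rem:3.10}(1) with the mutation sequence of $p(s,t_0)$ transported by $\tau^{-1}$. This rewrites it as $(\Sigma_{t_0},\Sigma_{t'},\sigma\tau^{-1},\varepsilon\eta)$, where $p(t_0,t')$ is the $\tau$-transported image of $p(s,t)$.

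\textbf{Checking the weights of $p(t_0,t')$.} Permutations preserve the relation $\sim\pm$ (Lemma~\ref{lem:sn-action-properties}), so the weight counts of $p(s,t)$ carry over to $p(t_0,t')$. Three things must hold:
\begin{itemize}
\item $\mathbf{w}_{t_0}(t_0,t')=2$;
\item the earlier counts $i_1,\dots,i_n$ are unchanged;
\item $\mathbf{w}_{t_{\diamond_{n+1}}}(t_0,t')<m$.
\end{itemize}
If these hold, induction finishes the proof.

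\textbf{Main obstacle.} The hard part is the weight bookkeeping, especially showing that the counts $i_k$ for $t_{\diamond_k}$, $k\le n$, are preserved when splitting at $s$. For this we will use the hypothesis $\mathbf{w}_{t_{\diamond_i}}(t_0,t_{\diamond_j})=0$ for $i<j$ and the pairwise inequivalence $B_{t_{\diamond_i}}\not\sim\pm B_{t_{\diamond_j}}$. Together these should make the occurrences of different marked classes along a path not interleave with the split point. If this fails in general, the fallback is to choose $s$ as the first vertex after the second $t_{\diamond_{n+1}}$-occurrence with $B_s\sim\pm B_{t_0}$, and to argue directly from the weight definition.
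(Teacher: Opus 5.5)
Your base case $m\le 2$ is fine, but the inductive step fails at the choice of $s$, because no such vertex exists. Membership $p(t_0,t)\in\mathcal{P}_1(\dots)(t_0)$ means $\mathbf{w}_{t_0}(t_0,t)=2$: the \emph{only} vertices of $p(t_0,t)$ whose matrix is $\sim\pm B_{t_0}$ are the endpoints $t_0$ and $t$. So $t_0$-equivalent vertices never separate occurrences of the $t_{\diamond_{n+1}}$-class; your justification asserts the opposite of what $\mathbf{w}_{t_0}(t_0,t)=2$ says. Moreover, for $m\ge 3$ neither endpoint is preceded by exactly two such occurrences. Your fallback returns $s=t$, which gives a trivial factorization and does not lower $m$.

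The missing idea, which is the whole point of a marked vertex, is to build $s$ \emph{off} the path by a detour. Let $t_i$ be the second vertex of $p(t_0,t)$ with $B_{t_i}\sim\pm B_{t_{\diamond_{n+1}}}$, pick a permutation $\rho$ with $\rho B_{t_{\diamond_{n+1}}}=\pm B_{t_i}$, and put $s=\mu_{\rho(p(t_{\diamond_{n+1}},t_0))}(t_i)$. By Lemma~\ref{lem:sn-action-properties}, the matrices along $p(t_i,s)$ are signed $\rho$-images of those along $p(t_{\diamond_{n+1}},t_0)$. Assumption $(\diamond_{n+1})$ then gives $B_s\sim\pm B_{t_0}$. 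It also shows the detour meets no $t_0$-equivalent vertex except $s$, no $t_{\diamond_{n+1}}$-equivalent vertex except $t_i$, and, because $\mathbf{w}_{t_{\diamond_k}}(t_0,t_{\diamond_{n+1}})=0$, no $t_{\diamond_k}$-equivalent vertex for $k\le n$. In the tree, $p(t_0,s)\subseteq p(t_0,t_i)\cup p(t_i,s)$ and $p(s,t)\subseteq p(s,t_i)\cup p(t_i,t)$. Hence both paths have $\mathbf{w}_{t_0}=2$, while $\mathbf{w}_{t_{\diamond_{n+1}}}$ is at most $2$, resp.\ at most $m-1$. This is the paper's construction. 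Your proposal only uses numerical consequences of Assumption $(\diamond_{n+1})$, never the path $p(t_0,t_{\diamond_{n+1}})$ itself, and without it there is no admissible split point.

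Two further steps also need repair. First, Lemma~\ref{rem:3.10}(1) keeps the permutation and moves \emph{both} seeds. Bringing $\Sigma_s$ to $\Sigma_{t_0}$ requires applying $p(s,t_0)$ itself, after which the second seed is $\Sigma_{t'}$ with $t'=\mu_{\sigma\tau^{-1}(p(s,t_0))}(t)$. The path $p(t_0,t')$ measures the displacement of the same automorphism at $t_0$ rather than at $s$, and in general it does not carry the weights of $p(s,t)$. Use the other order instead. Set $\phi=(\Sigma_{t_0},\Sigma_s,\tau,\eta)$ and $t^*=\mu_{\tau^{-1}(p(s,t))}(t_0)$; then Lemma~\ref{rem:3.10}(1) gives $\phi=(\Sigma_{t^*},\Sigma_t,\tau,\eta)$. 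By Lemma~\ref{rem:3.10}(2), $f=\phi\circ(\Sigma_{t_0},\Sigma_{t^*},\tau^{-1}\sigma,\eta\varepsilon)$. The matrices along $p(t_0,t^*)$ are signed $\tau^{-1}$-images of those along $p(s,t)$, so the weights agree. Equivalently, your first factor is the conjugate by $\phi$ of this second factor.

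Second, your ``main obstacle'' is misdiagnosed. The hypothesis $\mathbf{w}_{t_{\diamond_k}}(t_0,t_{\diamond_{n+1}})=0$ controls the detour, not interleaving along $p(t_0,t)$. Nor can the counts $i_1,\dots,i_n$ be expected to survive unchanged, since the $t_{\diamond_k}$-equivalent vertices of $p(t_0,t)$ are split between $p(t_0,t_i)$ and $p(t_i,t)$. What one actually gets is $\mathbf{w}_{t_{\diamond_k}}\le i_k$ for each factor. These are exactly the inequalities the paper's proof records before asserting membership in $G_1(i_1,\dots,i_n,\cdot)(t_0)$. So the induction on $m$ should be run simultaneously over all tuples bounded by $(i_1,\dots,i_n)$, which still suffices for Theorem~\ref{thm:refined-generator00}.
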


\begin{proof}
Let \( p(t_0, t) \in \mathcal{P}(i_1, \dots, i_n, i_{n+1})(t_0) \) and \( f = f_{p(t_0, t)} \). If \( i_{n+1} \in \{0,1\} \), the statement holds clearly. Assume that \( i_{n+1} = \ell \), the statement holds, then we consider \( i_{n+1} = \ell + 1 \).

Recall there exist \( \sigma_t \in S_n \) and \( \varepsilon_t \in \{\pm \} \) such that \( f = (\Sigma_{t_0}, \Sigma_t, \sigma_t, \varepsilon_t) \). Let \( t_i \) be the vertex on \( p(t_0, t) \) satisfying \( B_{t_i} \sim \pm B_{t_{n+1}} \), with:
\[
\begin{split}
\mathbf{w}_{t_0}(t_0, t_i) &= 2, \quad  \mathbf{w}_{t_{\diamond_{n+1}}}(t_0, t_i) = 2, \quad 0 \leq \mathbf{w}_{t_{\diamond_{k}}}(t_0, t_i) \leq i_k, (k \in [i]=\{0,1,\cdots,i\} ) \\
\mathbf{w}_{t_0}(t_i, t) &= 2, \quad \mathbf{w}_{t_{\diamond_{n+1}}}(t_i, t_{ }) = \ell, \quad 0 \leq \mathbf{w}_{t_{\diamond_{k}}}(t_i,t) \leq i_k.
\end{split}
\]

Suppose \( \tau(B_{t_{n+1}}) = \pm B_{t_s} \); set \( s = \mu_{p(t_{n+1}, t_0)}(t_i) \). By Lemma~\ref{rem:3.10}(1) and Assumption \( (\diamond_{n+1}) \), \( B_s \sim \pm B_{t_0} \), and \( w_{t_0}(t_i, s) = w_{t_{n+1}}(t_i, s) = 1 \), \( w_{t_k}(t_i, s) = 0 \) for all \( k \in [n] \). Consequently:
\[
\begin{split}
\mathbf{w}_{t_0}(t_0, s) &= 2, \quad 1 \leq \mathbf{w}_{t_{\diamond_{n+1}}}(t_0, s) \leq 2, \quad 0 \leq \mathbf{w}_{t_{\diamond_{k}}}(t_0, s) \leq i_k,  \\
\mathbf{w}_{t_0}(s, t) &= 2, \quad \ell - 1 \leq \mathbf{w}_{t_{\diamond_{n+1}}}(s, t) \leq \ell, \quad 0 \leq \mathbf{w}_{t_{\diamond_{k}}}(s,t) \leq i_k.
\end{split}
\]

By Lemma~\ref{rem:3.10}(2) and (3), \( f = h_{p(s, t)} \circ g_{p(t_0, s)} \). Clearly, \( g_{p(t_0, s)} \in G_1(i_1, \dots, i_n, 1)(t_0) \cup G_1(i_1, \dots, i_n, 2)(t_0) \), and \( h_{p(s, t)} \in G_1(i_1, \dots, i_n, \ell)(t_0) \cup G_1(i_1, \dots, i_n, \ell - 1)(t_0) \).

By Lemma~\ref{rem:3.10}(3), there exist \( \psi_g, \psi_h \in G_0(t_0) \),
\[
g \in H_1(i_1, \dots, i_n, 1)(t_0) \cup H_1(i_1, \dots, i_n, 2)(t_0),
\]
and
\[
h \in H_1(i_1, \dots, i_n, \ell)(t_0) \cup H_1(i_1, \dots, i_n, \ell - 1)(t_0)
\]
such that \( h_{p(s, t)} = h \circ g_h \) and \( g_{p(t_0, s)} = g \circ g_g \).

Thus, \( f = h \circ \psi_h \circ g \circ \psi_g \). By induction, we complete the proof.
\end{proof}

\begin{theorem}\label{thm:refined-generator00}
$\operatorname{Aut}(\mathcal{A})$ is generated by sets $G_0 (t_0)$ and $ \bigcup_{\substack{k \in [n] \\ i_k \in \{0,1,2\}}} H_{1}(i_1,i_2,\cdots,i_n)(t_0) $.
\end{theorem}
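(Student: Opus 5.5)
The proof goes by induction on the number $n$ of marked vertices, with Theorem~\ref{thm:refined-generator2} (Fu--Liang) as the base case and Lemma~\ref{lemma111} as the inductive step. For $n=0$ the statement reads: $\operatorname{Aut}(\mathcal{A})$ is generated by $G_0(t_0)\cup H_1(t_0)$, which is exactly the first part of Theorem~\ref{thm:refined-generator2}. For $n=1$ it is the refined part of that theorem, with $t_{\diamond_1}=t_\diamond$.

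First reduction. By Theorem~\ref{thm:refined-generator2} and Lemma~\ref{rem:3.10}(3), every $f\in\operatorname{Aut}(\mathcal{A})$ is a finite product of elements of $G_0(t_0)$ and of automorphisms $f_{p(t_0,t)}$ with $p(t_0,t)\in\mathcal{P}_1(t_0)$. Each such path has some weight vector $(\mathbf{w}_{t_{\diamond_1}}(t_0,t),\dots,\mathbf{w}_{t_{\diamond_n}}(t_0,t))$, so $f_{p(t_0,t)}\in G_1(j_1,\dots,j_n)(t_0)$ for some nonnegative integers $j_1,\dots,j_n$. Hence it suffices to prove the following claim by induction on $r$:
\[
G_1(j_1,\dots,j_r)(t_0)\subseteq\Bigl\langle G_0(t_0)\cup\textstyle\bigcup_{i_1,\dots,i_r\in\{0,1,2\}}H_1(i_1,\dots,i_r)(t_0)\Bigr\rangle .
\]

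Inductive step on $r$. Assume the claim for $r-1$ and take $f\in G_1(j_1,\dots,j_r)(t_0)\subset G_1(j_1,\dots,j_{r-1})(t_0)$. By the induction hypothesis, $f$ is a product of elements of $G_0(t_0)$ and of elements $f_{p}$ with $p\in\mathcal{P}_1(i_1,\dots,i_{r-1})(t_0)$, all $i_k\in\{0,1,2\}$. Each such $f_p$ lies in $G_1(i_1,\dots,i_{r-1},j)(t_0)$ for some $j\ge 0$, namely $j=\mathbf{w}_{t_{\diamond_r}}(t_0,t)$. Now apply Lemma~\ref{lemma111} repeatedly, inducting on $j$ as in its proof, to lower the last index until it lies in $\{0,1,2\}$. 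At each stage, $f$ splits as $h\circ\psi_h\circ g\circ\psi_g$ with $\psi_g,\psi_h\in G_0(t_0)$. The factor $g$ has last index in $\{1,2\}$ and $h$ has last index in $\{\ell-1,\ell\}$, while the first $r-1$ indices stay bounded by the previous $i_k\le 2$.

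Main obstacle: checking that this splitting keeps the earlier coordinates inside $\{0,1,2\}$ and strictly decreases the last coordinate, so that the recursion terminates. A split with $h$ at index $\ell$ would not decrease. I would handle this by a lexicographic induction on the pair $(\mathbf{w}_{t_{\diamond_r}}(t_0,t),\ \text{length of }p(t_0,t))$: when $h$ retains index $\ell$, the path $p(s,t)$ is strictly shorter than $p(t_0,t)$, because $s$ is obtained by cutting at $t_i$ and returning to a $B_{t_0}$-equivalent seed.

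The key input for the bound on the earlier coordinates is the condition in Assumption~($\diamond_n$) that $\mathbf{w}_{t_{\diamond_i}}(t_0,t_{\diamond_j})=0$ for $i<j$. It makes the detour to $s$ contribute nothing to the counts $\mathbf{w}_{t_{\diamond_k}}$ for $k<r$. Together with the bounds $0\le\mathbf{w}_{t_{\diamond_k}}(\cdot)\le i_k$ recorded in the proof of Lemma~\ref{lemma111}, this keeps every earlier coordinate in $\{0,1,2\}$. With this, every generator is reduced to $G_0(t_0)$ and $H_1(i_1,\dots,i_n)(t_0)$ with all $i_k\in\{0,1,2\}$, which proves the theorem.
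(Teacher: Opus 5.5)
Your route is the paper's: induct on the number of marked vertices, starting from Theorem~\ref{thm:refined-generator2}. At each stage you place $H_1(i_1,\dots,i_{r-1})(t_0)$ inside $\bigcup_{\ell\ge 0}G_1(i_1,\dots,i_{r-1},\ell)(t_0)$ and use Lemma~\ref{lemma111} to push the new coordinate into $\{0,1,2\}$. Your remark that the detour to $s$ only keeps the earlier coordinates \emph{bounded} by $i_k$, rather than equal to them, is correct. It matches what the proof of Lemma~\ref{lemma111} actually delivers, and it is why the theorem takes the union over all of $\{0,1,2\}^n$.

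Your ``main obstacle'' paragraph, however, rests on a misreading, and the remedy you propose is not sound.

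\textbf{The obstacle does not arise.} In the proof of Lemma~\ref{lemma111}, $f$ has last weight $\ell+1$ (that is the case being proved), while the factors have last weight at most $2$ and at most $\ell$. So the last coordinate always drops strictly. To see this, let $u_1,u_2,\dots,u_m$ ($m\ge 3$) be the $t_{\diamond_r}$-equivalent vertices on $p(t_0,t)$ and take $t_i=u_2$. The detour $p(u_2,s)$ contains no $t_{\diamond_r}$-equivalent vertex other than $u_2$, because it is the transport of $p(t_{\diamond_r},t_0)$ and $\mathbf{w}_{t_{\diamond_r}}(t_0,t_{\diamond_r})=1$. Hence $p(t_0,s)$ meets at most $u_1,u_2$, and $p(s,t)$ meets at most $u_2,\dots,u_m$, i.e.\ at most $m-1$ such vertices. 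A plain strong induction on the last weight therefore terminates, and no case where ``$h$ retains index $\ell$'' ever occurs.

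\textbf{The fix would not work anyway.} You claim $p(s,t)$ is strictly shorter than $p(t_0,t)$. But $|p(s,t)|$ can be as large as $|p(t_0,t_{\diamond_r})|+|p(t_i,t)|$, while $|p(t_0,t)|=|p(t_0,t_i)|+|p(t_i,t)|$. Assumption~($\diamond_n$) does not force $|p(t_0,t_{\diamond_r})|<|p(t_0,t_i)|$. A marked vertex need only be the first of its class along its own path from $t_0$, not the closest member of its class to $t_0$, so $t_i$ may well be nearer to $t_0$ than $t_{\diamond_r}$ is.

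Replace that paragraph by the strict-decrease observation above, and your argument coincides with the paper's proof.
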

\begin{proof}
Firstly, we prove the case $n=2$.
For each $i\in\{0,1,2\}$, we have \[H_{1,i}^{t_\diamond}(t_0)=\bigcup_{k\ge 0}H_1(i,k)(t_0).\] 
Since \[H_1(i,k)(t_0)\subseteq G_1(i,k)(t_0),\] by Lemma~\ref{lemma111} with $n=1$,
every element of $G_1(i,k)(t_0)$ is a finite product of elements from 
\(G_0(t_0) \text{and} \bigcup_{j\in\{0,1,2\}}H_1(i,j)(t_0).\)
Hence $H_1(i,k)(t_0)$ is generated by 
\[G_0(t_0) \text{and} \bigcup_{j\in\{0,1,2\}}H_1(i,j)(t_0).\]

Since $\operatorname{Aut}(\mathcal{A})$ is generated by 
\[G_0(t_0) \text{and} \bigcup_{i\in\{0,1,2\}}H_{1,i}^{t_\diamond}(t_0),\]
it follows that $\operatorname{Aut}(\mathcal{A})$ is generated by 
\[G_0(t_0) \text{and} \bigcup_{i,j\in\{0,1,2\}}H_1(i,j)(t_0).\]

Then we assume the conclusion holds for \( n \geq 2 \), i.e., \( \operatorname{Aut}(\mathcal{A}) \) is generated by \[G_0(t_0) \ \text{  and} \bigcup_{\substack{k \in [n] \\ i_k \in \{0,1,2\}}} H_{1}(i_1,i_2,\cdots,i_n)(t_0). \] 

Note that for any $n>2$ and for any $i_1,\cdots,i_n$, \[H_1(i_1, \dots, i_n)(t_0) \subset G_1(i_1, \dots, i_n)(t_0), \] and \[ G_1(i_1, \dots, i_n)(t_0) = \bigcup_{\ell \geq 0} G_1(i_1, \dots, i_n, \ell)(t_0), \] and for each \( \ell \geq 0 \), \( G_1(i_1, \dots, i_n, \ell)(t_0) \) is generated by \[G_0(t_0) \ \text{ and } \bigcup_{\substack{k \in [n+1] \\ i_k \in \{0,1,2\}}} H_1(i_1, \dots, i_n, i_{n+1})(t_0), \] which follows from Lemma~\ref{lemma111}. 

Therefore, \(\bigcup_{\substack{k \in [n] \\  i_k \in \{0,1,2\}}} H_1(i_1, \dots, i_n)(t_0) \) is generated by the elements in \[ G_0(t_0) \ \text{  and} \bigcup_{\substack{k \in [n+1] \\ i_k \in \{0,1,2\}}} H_1(i_1, \dots, i_n, i_{n+1})(t_0), \] as desired.

\end{proof}

\subsubsection*{Marked vertices strategy to find generators of $\operatorname{Aut}(\mathcal{A})$ respectively.}

\begin{enumerate}[label={(\arabic*)}]  
    \item Fix marked vertices $t_\diamond$, $t_{\diamond_2}$, $\cdots$ and $t_{\diamond_n}$ satisfying Assumption$(\diamond_n)$.
    \item Find the vertex $t$ such that $B_t \thicksim\pm B_{t_0}$ or $B_t \sim \pm B_{t_{\diamond_k}}$ in each parth.
    \item This process stops if 
    \begin{enumerate}[label={(\roman*)}]  
        \item find another vertex $t$ with $B_t \sim \pm B_{t_0}$
        \item find vertices $t_1, t_2, t_3$ such that $B_{t_i} \sim \pm B_{t_{\diamond_k}}. ( i=1,2,3 \text{ and } k=1,2,\cdots,n) $
    \end{enumerate}
\end{enumerate}

\subsection{Algorithms}\label{abcd}

In this section, we give the algorithms for computing $\operatorname{Aut}(\mathcal{A})$.

Firstly, we design  Algorithm~\ref{ii} and Algorithm~\ref{iii} according to Theorem~\ref{thm:refined-generator00}.
Secondly, we reduce the number of generators by sign permutation equivalence(removing duplicate cluster automorphisms). Lastly, we simplify the generators and calculate the $\operatorname{Aut}(\mathcal{A})$.

\begin{algorithm}[H]
    \caption{Find the Set \( G_0 \)}\label{ii}
    \begin{algorithmic}[1]
        \Require \\
            Initial exchange matrix \( B_{t_0} \) of the cluster algebra,\\
            Permutation group \( S_n \)
        \Ensure 
            Set \( G_0 = \{ (\sigma, \pm) \mid \sigma(B_{t_0}) = \pm B_{t_0} \} \)

        \State Initialize \( G_0 \) as an empty set.

        \For{each permutation \( \sigma \in S_n \)}
            
            \If{ \( \sigma_(B_{t_0}) == B_{t_0} \) }
                \State Add \( (\sigma, +) \) to \( G_0 \).
            \ElsIf{ \( \sigma(B_{t_0}) == -B_{t_0} \) }
                \State Add \( (\sigma, -) \) to \( G_0 \).
            \EndIf
        \EndFor
        
        \State Output \( G_0 \).
    \end{algorithmic}
\end{algorithm}

\begin{algorithm}[H]
    For marked vertices \( t_{\diamond_1}, \dots, t_{\diamond_n} \), \( k_i \) is denoted the number of times the mutation path \( p \) passes through \( \pm \sigma(B_{\diamond}) \). In sequel, we call \( (k_1, \dots, k_n) \) the frequency vector of the path \( p \).

    \caption{Find the Set \( H_1 \)}\label{iii}
    \begin{algorithmic}[1]
        \Require \\
            Initial matrix \( B_{t_0} \),\\
            Permutation group \( S_n \),\\
            Mutation rules \( \{ \text{mutate}(\text{matrix}, k) \} \),\\
            Remarked matrices \( B_{t_{\diamond_1}}, B_{t_{\diamond_2}}, \cdots , B_{t_{\diamond_n}} \)
        \Ensure 
           \(H_1 (t_0) = \{ f_{p(t_0, t)} \mid p(t_0, t) \in \mathcal{P}_1 (t_0) \}\).

        \State Initialize a queue: \( \text{queue} = [ (B_{t_0}, [\text{  } ], (0, \cdots,0 )) ] \)  \hfill \Comment{[(current matrix,current mutation  path, frequency vector of the current path)]}
        \State \( \text{processed} = \emptyset \) 
        \State \( H_1 = \emptyset \)  \Comment{Initialize results}

        \While{ queue is not empty }
            \State Extract first entry: \( (\text{B}, \text{path}, (k_1, \cdots,k_n) \) from queue.

            \If{ \( k_i \geq 3 \) }
                \State Skip this path (terminate exploration of this path).
                \State \textbf{continue}
            \EndIf

            \For{mutation direction \( k \in n \)}
                \State Mutate \( \text{B} \) in direction \( k \): \( \text{new\_B} = \text{mutate}(\text{B}, k) \).
                \State Update path: \( \text{new\_path} = \text{path} + [k] \)  

                \If{ \( \text{length}(\text{new\_path}) > 1 \) and \( \text{new\_path}[-1] == \text{new\_path}[-2] \) }
                    \State \textbf{continue}
                \EndIf
                \State \( \text{new}\_k_i = k_i \).    \Comment{Initialize \( \text{new}\_k_i \)}
                \State \( \text{automorphism\_found} = \text{False} \)
                \For{each permutation \( \sigma_i \in S_n \)}
                    \State \( P = \sigma_i(\text{new\_B}) \)  \Comment{Apply permutation \( \sigma_i \)}
                    \If{ \( P == B_{t_0} \) }
                        \State Add \( (\text{new\_path}, \sigma_i, +, (\text{new}\_k_1, \cdots,\text{new}\_k_n)) \) to \( H_1 \).
                        \State \( \text{automorphism\_found} = \text{True} \)
                    \ElsIf{ \( P == -B_{t_0} \) }
                        \State Add \( (\text{new\_path}, \sigma_i, -, (\text{new}\_k_1,\cdots \text{new}\_k_n)) \) to \( H_1 \).
                        \State \( \text{automorphism\_found} = \text{True} \)
                    \ElsIf{ \( P == B_{t_{\diamond_i}} \text{ or } P == -B_{t_{\diamond_i}}\) } 
                        \State \( \text{new}\_k_i = k_i + 1 \)  
                    \EndIf
                \EndFor

                \If{\text{not } \( \text{automorphism\_found} \) }
                    \State Add \( (\text{new\_B}, \text{new\_path}, (\text{new}\_k_1, \cdots, \text{new}\_k_n)) \) to queue.
                \EndIf

                
            \EndFor
        \EndWhile

        \State Output \( H_1 \).
    \end{algorithmic}
\end{algorithm}

\begin{algorithm}[H]
    For any two generators $g_1=(\mathrm{path}_1,\sigma_1)$, $g_2=(\mathrm{path}_2,\sigma_2)$ in $H_1$, their composition is defined as
    \[
    g_1 \circ g_2 = \big( \mathrm{path}_1 + \sigma_1(\mathrm{path}_2),\ \sigma_1\circ\sigma_2 \big).
    \]
    A generator is redundant if its cluster coincides with the cluster obtained by composing other generators.

    \caption{Generator Redundancy Elimination via Composition}\label{alg:reduce}
    \begin{algorithmic}[1]
        \Require \\
            Generator set $H_1$,\\
            Initial cluster $X_{t_0}=(x_1,x_2,x_3,x_4)$,\\
            Cluster mutation function $\mathrm{mutate}(X, \mathrm{path})$,\\
            Identity generator $id$.
        Ensure 
            Irredundant generating set $H_1'$.

        \State $H_1' = H_1 \setminus \{ id \}$

        \ForAll{$g_1 \in H_1'$}
            \ForAll{$g_2 \in H_1'$}
                \State $\mathrm{path}_{\mathrm{new}} = \mathrm{path}_1 + \sigma_{1}(\mathrm{path}_2)$
                \State $\sigma_{\mathrm{new}} = \sigma_{1} \circ \sigma_{2}$
                
                \State $X_{\mathrm{new}} = \mathrm{mutate}(X_{t_0}, \mathrm{path}_{\mathrm{new}} )$
                
                \ForAll{$g_3 \in H_1'$}
                    \State $X_3 = \mathrm{mutate}(X_{t_0}, \mathrm{path}_3)$
                    \If{$X_{\mathrm{new}} == X_3$}
                        \State $H_1' = H_1' \setminus \{g\}$
                    \EndIf
                \EndFor
            \EndFor
        \EndFor
        

        \State Output $H_1'$.
    \end{algorithmic}
\end{algorithm}
\vspace{120pt}

\section{Applications in finite mutation type cluster algebra of rank 4}\label{s5}
In this section, we apply the algorithms developed in Section~\ref{abcd} to all cluster algebras of finite mutation type of rank $4$. Then we get the following three tables.

\medskip

For a rank $4$ cluster algebra $\mathcal{A}$ of finite mutation type, Table~\ref{tab:aut} lists the computational results (together with complexity) produced by the algorithms in Subsection~\ref{abcd}. The table presents the essential generators of each cluster automorphism group, the maximum traversal depth, and the number of mutation paths in $H_1$.

\begin{table}[H]
    \centering
    \small  
    \setlength{\tabcolsep}{3pt} 
    \begin{tabular}{>{\centering\arraybackslash}m{2cm}
                     >{\centering\arraybackslash}m{3cm}
                     >{\centering\arraybackslash}m{5.7cm}
                     >{\centering\arraybackslash}m{1.2cm}
                     >{\centering\arraybackslash}m{3cm}}
        \toprule
        \textbf{Diagram} & \textbf{Type}  & \textbf{Generators \allowbreak of $\operatorname{Aut}(\mathcal{A})$}  & \textbf{Max \allowbreak Depth} &\textbf{Mutation \allowbreak Paths  } \\
        \midrule
        \includegraphics[width=1.6cm]{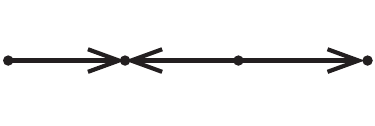} & $\mathbb{A}_4$  & $\psi_{(14)(23)}^{-}, g_{31}^{(14)(23), -}$ & 7&196 \\
        &&\\
        \includegraphics[width=1cm]{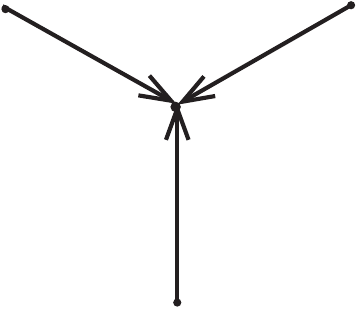} & $\mathbb{D}_4$  & $\psi_{(13)}^{+},\psi_{(34)}^{+},\psi_{(14)}^{+},g_{2}^{id, -}, g_{342321}^{(24), +}$ & 7&46\\
        \includegraphics[width=1.4cm]{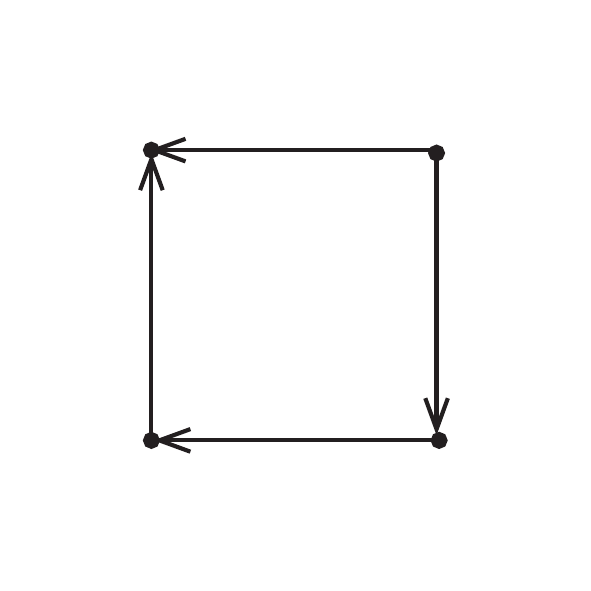} & $\tilde{\mathbb{A}}_{1,3}$  & $\psi_{(12)(34)}^{-}, g_1^{(24),-}$ & 5 &12\\
         \includegraphics[width=1.4cm]{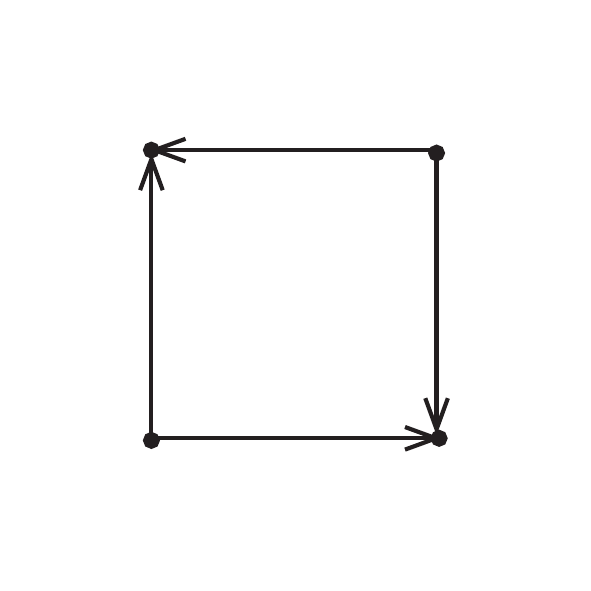} & $\tilde{\mathbb{A}}_{2,2}$ & $\psi_{(13)}^{+},\psi_{(4321)}^{-}, g_{31}^{(13),-}$ &7 &116\\
        \multirow{2}{1.6cm}{\includegraphics[width=1.6cm]{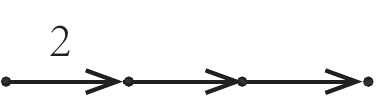}} 
            & $\mathbb{B}_4$ &$g_{31}^{id,-}, g_{42}^{id,-}$ & 13 &8356 \\
            & $\mathbb{C}_4$ &  $g_{31}^{id,-}, g_{42}^{id,-}$& 13&8356 \\
        &&\\
        \includegraphics[width=1.6cm]{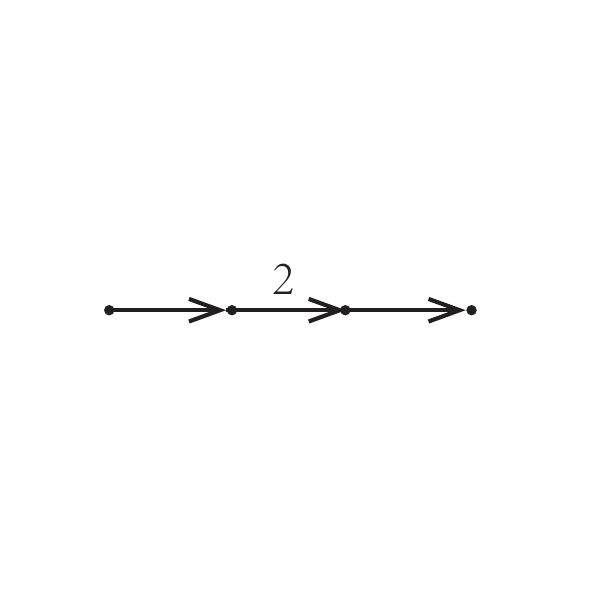}& $\mathbb{F}_4$  & $g_{31}^{id,-}, g_{42121}^{(12),-}$ & 15&13692\\
        \includegraphics[width=1.6cm]{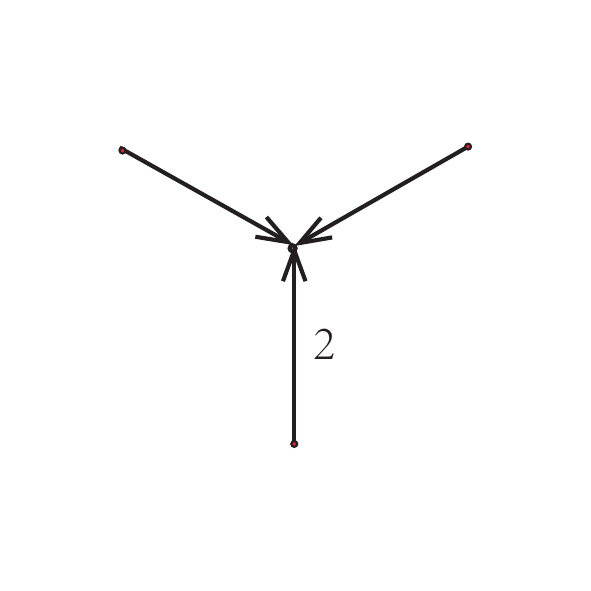} & $\mathbb{D}_{4}(2,1,1)$ & $\psi_{(34)}^{+}, g_{2}^{id,-}, g_{24231}^{(24),-}$ & 15&11218 \\
        \includegraphics[width=2cm]{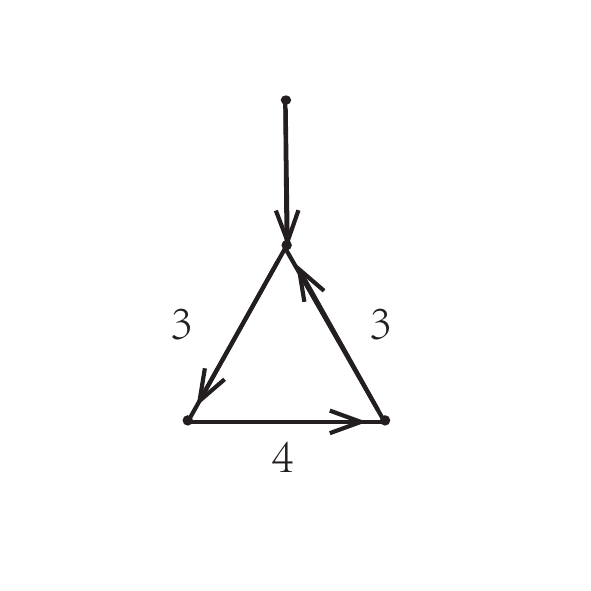}& $\mathbb{G}_{2}^{(*,+)}$  & $g_{1}^{(34),-}, g_{3}^{(34),+}, g_{1242}^{(12)(34),+}$ & 5&12\\
        \includegraphics[width=1.4cm]{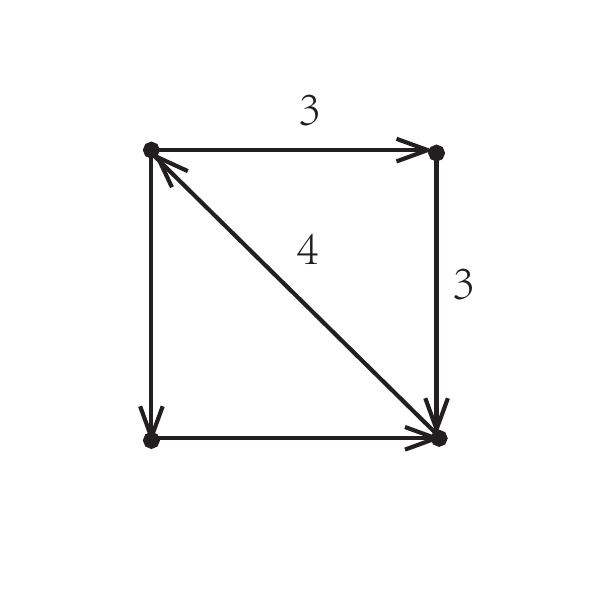}& $\mathbb{G}_{2}^{(*,*)}$  & $\psi_{(13)}^{-}, g_{1}^{id,-}, g_{42}^{id,-}, g_{212}^{(34),-}$&3&12\\
        \includegraphics[width=1.6cm]{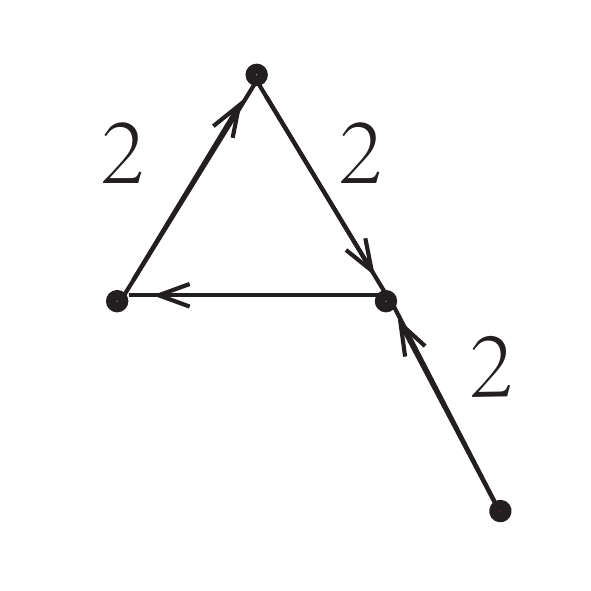}
                                & $(2,2,1,2)d_1 \neq d_3$  & $g_{31}^{id,-}, g_{43241}^{(24),+}$  &19&544724\\
        \includegraphics[width=1.6cm]{t5.pdf}                        & $(2,2,1,2)d_1 = d_3$  & $g_{2}^{(13),-}, g_{31}^{id,-}, g_{1241}^{(13)(24),+}$ & 11&358\\
                               
        \includegraphics[width=2cm]{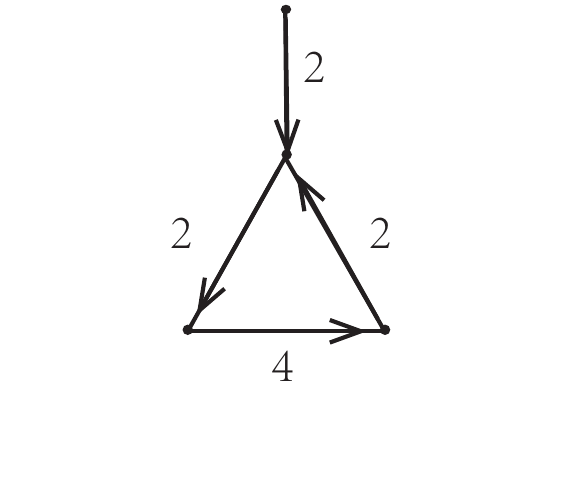}  & $(2,2,2,4)d_1=d_3$  & $g_{1}^{(34),-}, g_{3}^{(34),+}, g_{2}^{(13),-}$ & 1&4\\
        \includegraphics[width=2cm]{t7.pdf}                        & $(2,2,2,4)d_1 \neq d_3$ & $g_{1}^{(34),-}, g_{3}^{(34),+}, g_{2312}^{id,-}, g_{2432}^{id,-}$ & 5&18 \\
        
        \includegraphics[width=1cm]{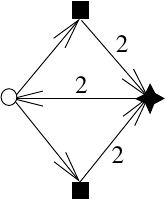} & $(1,1,2,2,2)$  & $g_{2}^{(13),-}, g_{31}^{(13),-}, g_{4213421}^{(14),-}, \psi_{(13)}^{+}$ & 15&8029\\
        \includegraphics[width=1cm]{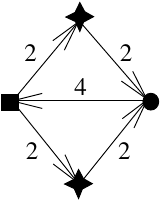} & $(2,2,2,2,4)$ & $\psi_{(13)}^{+}, \psi_{(24)}^{-}, g_{2}^{(24),+}, g_{31}^{(13),-}, g_{1421}^{(24),+}$ &5&41\\
        \bottomrule
    \end{tabular}
    \caption{Generators and data of cluster automorphism groups of finite mutation types.}
    \label{tab:aut}
\end{table}

\newpage

We recall some groups and group constructions that appear in the tables below:
\begin{itemize}
    \item $\{ id \}$: the trivial group.
    \item $D_m$: the dihedral group of order $2m$, which has a presentation as
    \[
    \langle \alpha, y \mid \alpha^m = y^2 = id, \, y\alpha y^{-1} = \alpha^{-1} \rangle.
    \]
    \item $D_\infty$: the infinite dihedral group, which has presentations as
    \[
    \langle r, s \mid r^2 = s^2 = id \rangle = \langle \alpha, y \mid y^2 = id, \, y\alpha y^{-1} = \alpha^{-1} \rangle,
    \]
    and the isomorphism is given by $r \mapsto \alpha y$ and $s \mapsto y$.
    \item $A \rtimes_{\rho} B$: the semidirect product of groups $A$ and $B$, where
    \(
    \rho: B \to \mathrm{Aut}(A).
    \)
    \item $A \times B$: the direct product of groups $A$ and $B$.
\end{itemize}

The essential generators listed in Table~\ref{tab:aut} are obtained by eliminating redundant generators from the full output of the algorithm. The complete sets of generators directly computed by the code is given in Table~\ref{tab:aut_generators} below.
\newcolumntype{M}[1]{>{\centering\arraybackslash}m{#1}}
\newcolumntype{L}[1]{>{\raggedright\arraybackslash}m{#1}}
\newcolumntype{R}[1]{>{\raggedleft\arraybackslash}m{#1}}

\newcolumntype{X}[1]{>{\raggedright\arraybackslash}m{#1}<{\hspace{0.4cm}}}

\normalsize 

\renewcommand{\arraystretch}{1.5} 

\begin{longtable}{M{2.5cm} M{4.0cm} >{\linespread{1.4}\selectfont}M{9.5cm}} 
    \toprule
    \textbf{Type} & \textbf{ Generators in $G_0$} & \textbf{Generators in $H_1$ } \\
    \midrule
    \endfirsthead
    
    \toprule
    \textbf{Type} & \textbf{Generators in $G_0$} & \textbf{Generators in $H_1$} \\
    \midrule
    \endhead
    
    \midrule
    \multicolumn{3}{r}{\footnotesize Continued on next page} \\
    \endfoot
    
    \bottomrule
    \multicolumn{3}{c}{\vspace{-0.2cm}} \\ 
    \caption{Generators with mutation paths of $\operatorname{Aut}(\mathcal{A})$ } \label{tab:aut_generators} \\
    \endlastfoot
    
    $\mathbb{A}_4$ & 
    $\psi_{id}^{+},\ \psi_{(14)(23)}^{-}$ & 
    $g_{31}^{(14)(23), -},\ g_{42121}^{(12), -},\ g_{42321}^{(23), +},$\par \vspace{0.12cm}
    $g_{143121}^{(12)(34), -},\ g_{343121}^{(12)(34), +},\ g_{214321}^{(241), -}$ \\
    \midrule

    $\mathbb{D}_4$ & 
    $\psi_{id}^{+},\ \psi_{(13)}^{+},\ \psi_{(34)}^{+},\ \psi_{(14)}^{+},$\par \vspace{0.12cm}
    $\psi_{(341)}^{+},\ \psi_{(431)}^{+}$ & 
    $g_2^{id,-},\ g_{431}^{id,-},\ g_{431321}^{(312),+},\ g_{341321}^{(2431),-},$\par \vspace{0.12cm}
    $g_{342321}^{(324),-},\ g_{214321}^{(12)(34),+},\ g_{124321}^{(34),-}$ \\
    \midrule

    $\tilde{\mathbb{A}}_{1,3}$ & 
    $\psi_{id}^{+},\ \psi_{(12)(34)}^{-}$ & 
    $g_1^{(24),-},\ g_2^{(13),-},\ g_{323}^{(14),-},\ g_{34134}^{(12),+}$ \\
    \midrule

    $\tilde{\mathbb{A}}_{2,2}$ & 
    $\psi_{id}^{+},\ \psi_{(13)}^{+},\ \psi_{(13)(24)}^{+},$
    $\psi_{(2341)}^{-},\ \psi_{(12)(34)}^{-},\ \psi_{(24)}^{+},$
    $\psi_{(14)(23)}^{-},\ \psi_{(4321)}^{-}$ &

    $ g_{42}^{(1432),+},\ g_{42321}^{(23),+},\ g_{31412}^{(1243),-},$\par \vspace{0.12cm}
    $g_{31}^{(13),-},\ g_{1423421}^{(13),-},\ g_{2314312}^{(1432),+}$ \\
    \midrule

    $\mathbb{B}_4$ & 
    $\{id\}$ & 
    $g_{31}^{id,-},\ g_{34123}^{(12),-},\ g_{42}^{id,-},\ g_{3412}^{id,+},$\par \vspace{0.12cm}
    $g_{2143}^{id,+},\ g_{413423}^{(312),+},\ g_{413421}^{(312),-},$\par \vspace{0.12cm}
    $g_{241234}^{(321),+},\ g_{2142341}^{(13),-}$ \\
    \midrule

    $\mathbb{C}_4$ & 
    $\{id\}$ & 
    $g_{42}^{id,-},\ g_{31}^{id,-},\ g_{3412}^{id,+},\ g_{2143}^{id,+},$\par \vspace{0.12cm}
    $g_{313423}^{(312),-},\ g_{413423}^{(312),+},\ g_{413421}^{(312),-},$\par \vspace{0.12cm}
    $g_{241234}^{(321),+},\ g_{2142341}^{(13),-}$ \\
    \midrule

    $\mathbb{F}_4$ ($d_2=2d_3$) & 
    $\{id\}$ & 
    $g_{31}^{id,-},\ g_{42121}^{(12),-},\ g_{24341}^{(34),+},$\par \vspace{0.12cm}
    $g_{143121}^{(12)(34),-},\ g_{343121}^{(12)(34),+}$ \\
    \midrule

    $\mathbb{F}_4$ ($2d_2=d_3$) & 
    $\{id\}$ & 
    $g_{31}^{id,-},\ g_{42121}^{(12),-},\ g_{24341}^{(34),+},$\par \vspace{0.12cm}
    $g_{143121}^{(12)(34),-},\ g_{343121}^{(12)(34),+}$ \\
    \midrule

    $\mathbb{D}_4(2,1,1)$ ($d_2=d_3=d_4$) & 
    $\psi_{(34)}^{+},\ \psi_{id}^{+}$ & 
    $g_{2}^{id,-},\ g_{24231}^{(24),-},\ g_{21423121}^{(24),+},$\par \vspace{0.12cm}
    $g_{142134123}^{(24),+},\ g_{4321413121}^{(23),-},\ g_{41213241321}^{id,-},$\par \vspace{0.12cm}
    $g_{32132413121}^{id,-}$ \\
    \midrule

    $\mathbb{D}_4(2,1,1)$ ($d_1=d_3=d_4$) & 
    $\psi_{(34)}^{+},\ \psi_{id}^{+}$ & 
    $g_{2}^{id,-},\ g_{24231}^{(24),-},\ g_{21423121}^{(24),+},$\par \vspace{0.12cm}
    $g_{142134123}^{(24),+},\ g_{4321413121}^{(23),-},\ g_{41213241321}^{id,-},$\par \vspace{0.12cm}
    $g_{32132413121}^{id,-}$ \\
    \midrule

    $\mathbb{G}_{2}^{(*,+)}$ & 
    $\{id\}$  & 
    $g_{1}^{(34),-},\ g_{3}^{(34),+},\ g_{4}^{(34),+},\ g_{1312}^{(12)(34),+},$\par \vspace{0.12cm}
    $g_{2412}^{id,-},\ g_{1242}^{(12)(34),+},\ g_{1342}^{(12)(34),-},\ g_{13132}^{(12),+},$\par \vspace{0.12cm}
    $g_{13232}^{(12),+}$ \\
    \midrule

    $\mathbb{G}_{2}^{(*,*)}$ & 
    $\psi_{(13)}^{-}$ & 
    $g_{1}^{id,-},\ g_{3}^{id,-},\ g_{42}^{id,-},\ g_{212}^{(34),-},$\par \vspace{0.12cm}
    $g_{312}^{(34),+},\ g_{132}^{(14),+},\ g_{232}^{(14),-},\ g_{214}^{(34),+},$\par \vspace{0.12cm}
    $g_{314}^{(34),-},\ g_{134}^{(14),-},\ g_{234}^{(14),+}$ \\
    \midrule

    $(2,2,1,2)$ ($d_1\neq d_3$) & 
    $\{id\}$  & 
    $g_{31}^{id,-},\ g_{2312}^{id,-},\ g_{43241}^{(24),+},\ g_{4324}^{(24),+},$\par \vspace{0.12cm}
    $g_{32412}^{(24),+},\ g_{321321}^{id,-},\ g_{21243121}^{id,-}$ \\
    \midrule

    $(2,2,1,2)$ ($d_1=d_3$) & 
    $\{id\}$  & 
    $g_{2}^{(13),-},\ g_{31}^{id,-},\ g_{1241}^{(13)(24),+},\ g_{12321}^{(13),-},$\par \vspace{0.12cm}
    $g_{423142121}^{(13),-},\ g_{3241}^{(13)(24),-},\ g_{32121}^{(13),+},\ g_{4324}^{id,-},$\par \vspace{0.12cm}
    $g_{123243124}^{(13),+},\ g_{232342123}^{(24),-},\ g_{3243}^{(13)(24),-},$\par \vspace{0.12cm}
    $g_{1243}^{(13)(24),-},\ g_{2142123}^{(24),+}$ \\
    \midrule

    $(2,2,2,4)$ ($d_1=d_3$) & 
    $\{id\}$  & 
    $g_{1}^{(34),-},\ g_{2}^{(13),-},\ g_{3}^{(34),+},\ g_{4}^{(34),+}$ \\
    \midrule

    $(2,2,2,4)$ ($d_1\neq d_3$) & 
    $\{id\}$  & 
    $g_{1}^{(34),-},\ g_{3}^{(34),+},\ g_{4}^{(34),+},\ g_{2312}^{id,-},$\par \vspace{0.12cm}
    $g_{2432}^{id,-},\ g_{2142}^{id,+},\ g_{2412}^{id,+},\ g_{23212}^{(34),-},$\par \vspace{0.12cm}
    $g_{24212}^{(34),+},\ g_{21232}^{(34),+}$ \\
    \midrule

    $(1,1,2,2,2)$ & 
    $\psi_{(13)}^{+},\ \psi_{id}^{+}$ & 
    $g_{2}^{(13),-},\ g_{31}^{(13),-},\ g_{4213421}^{(14),-},\ g_{12123}^{(13),+},$\par \vspace{0.12cm}
    $g_{41234121}^{(14),+},\ g_{4324234121}^{(13),-},\ g_{134214}^{(34),+},$\par \vspace{0.12cm}
    $g_{31234121}^{(14),-}$ \\
    \midrule

    $(2,2,2,2,4)$ & 
    $\psi_{(13)}^{+},\ \psi_{(24)}^{-},\ \psi_{(13)(24)}^{-}$ & 
    $g_{2}^{(24),+},\ g_{4}^{(24),+},\ g_{31}^{(13),-},\ g_{1421}^{(24),+},$\par \vspace{0.12cm}
    $g_{1423}^{(13)(24),-},\ g_{12321}^{(24),+},\ g_{14341}^{(24),+}$ \\
\end{longtable}

We further analyze the algebraic relations among the essential generators and identify the dihedral subgroups within each cluster automorphism group. The results are summarized in Table~\ref{tab:cluster_aut}.
\begin{longtable}{X{2.2cm} >{\linespread{1.3}\selectfont}L{5.2cm} L{4.8cm} M{3.8cm}}
    \toprule
    \textbf{Type} & \textbf{Generators of $\operatorname{Aut}(\mathcal{A})$} & \textbf{Relations} & \textbf{Dihedral subgroup} \\
    \midrule
    \endfirsthead
    
    \toprule
    \textbf{Type} & \textbf{ enerators of $\operatorname{Aut}(\mathcal{A})$} & \textbf{Relations} & \textbf{Dihedral subgroup} \\
    \midrule
    \endhead
    
    \midrule
    \multicolumn{4}{r}{\footnotesize Continued on next page} \\
    \endfoot
    
    \bottomrule
    \multicolumn{4}{c}{\vspace{-0.2cm}} \\ 
    \caption{Cluster automorphism groups: generators, relations, and dihedral subgroups} \label{tab:cluster_aut} \\
    \endlastfoot

    $\mathbb{A}_4$ & 
    $x = \psi_{(14)(23)}^{-},\; y = g_{31}^{(14)(23), -}$ & 
    $x^2 = y^7 = id$, \par \vspace{0.12cm} $yx = xy^{-1}$ & 
    $\operatorname{Aut}(\mathcal{A}) \cong D_7$ \\
    \midrule

    $\mathbb{D}_4$ & 
    $x = g_{2}^{id, -},\; y = g_{342321}^{(24), +}$, \par\vspace{0.10cm} 
    $S_3$: $\psi_{(13)}^{+},\;\psi_{(34)}^{+},\;\psi_{(14)}^{+}$ & 
    $x^2 = y^4 = id$, \par \vspace{0.12cm} $yx = xy^{-1}$ & 
    $\operatorname{Aut}(\mathcal{A}) \cong S_3 \times D_4$ \\
    \midrule

    $\tilde{\mathbb{A}}_{1,3}$ & 
    $x = \psi_{(12)(34)}^{-},\; y = g_1^{(24),-}$ & 
    $x^2 = y^2 = id$ & 
    $\operatorname{Aut}(\mathcal{A}) \cong D_\infty$ \\
    \midrule

    $\tilde{\mathbb{A}}_{2,2}$ & 
    $a = \psi_{(13)}^{+},\; b = \psi_{(4321)}^{-}$, \par\vspace{0.10cm} 
    $c = g_{31}^{(13),-}$ & 
    $a^2 = b^4 = c^2 = id$, \par \vspace{0.12cm} $ac = ca$, $ab^{-1} = ba$, \par \vspace{0.12cm} $c b^2 = b^2 c$ & 
    $\langle g_{31}^{(13),-},\psi_{(14)(23)}^{-} \rangle \cong D_\infty$ \\
    \midrule

    $\mathbb{B}_{4}$ & 
    $x = g_{31}^{id,-},\; y = g_{42}^{id,-}$ & 
    $x^2 = y^5 = id$, \par \vspace{0.12cm} $yx = xy^{-1}$ & 
    $\operatorname{Aut}(\mathcal{A}) \cong D_5$ \\
    \midrule

    $\mathbb{C}_{4}$ & 
    $x = g_{31}^{id,-},\; y = g_{42}^{id,-}$ & 
    $x^2 = y^5 = id$, \par \vspace{0.12cm} $yx = xy^{-1}$ & 
    $\operatorname{Aut}(\mathcal{A}) \cong D_5$ \\
    \midrule

    $\mathbb{F}_{4}$ & 
    $x = g_{31}^{id,-},\; y = g_{42121}^{(12),-}$ & 
    $x^2 = y^7 = id$, \par \vspace{0.12cm} $yx = xy^{-1}$ & 
    $\operatorname{Aut}(\mathcal{A}) \cong D_7$ \\
    \midrule

    $\mathbb{D}_{4}(2,1,1)$ & 
    $z = \psi_{(34)}^{+},\; r = g_{2}^{id,-},\;$\par \vspace{0.12cm}  $s =g_{2}^{id,-} \circ g_{24231}^{(24),-}$ & 
    $r^2 = s^2 = z^2 = id$, \par \vspace{0.12cm} $rz = zr$, $sz = zs$, \par \vspace{0.12cm} $\langle r,s \rangle \cong D_{\infty}$ & 
    $\operatorname{Aut}(\mathcal{A}) \cong D_\infty \times Z_2$ \\
    \midrule

    $\mathbb{G}_{2}^{(*,+)}$ & 
    $x = g_{1}^{(34),-},\; y = g_{3}^{(34),+},\;$\par \vspace{0.12cm} $z = g_{1242}^{(12)(34),+}$ & 
    $z^9 = x^2 = (yx)^2 = id$, \par \vspace{0.12cm} $z^3 y = y z^3$ & 
    $\langle g_{1}^{(34),-}, g_{3}^{(34),+} \rangle \cong D_\infty$ \\
    \midrule

    $\mathbb{G}_{2}^{(*,*)}$ & 
    $x = \psi_{(13)}^{-},\; a = g_{1}^{id,-},\;$\par \vspace{0.12cm}$ y = g_{42}^{id,-},\; b = g_{212}^{(34),-}$ & 
    $x^2 = a^2 = y^2 = b^2 = id$, \par \vspace{0.12cm} $(by)^6 = id$, \par \vspace{0.12cm} $a(by)^3 = (by)^3 a$, \par \vspace{0.12cm} $xy = yx$, $ab = ba$ & 
    $\langle \psi_{(13)}^{-}, g_{1}^{id,-} \rangle \cong D_\infty$ \\
    \midrule

    $(2,2,1,2)$ \par $d_1 \neq d_3$ & 
    $r = g_{31}^{id,-},\; s = g_{43241}^{(24),+} \circ g_{31}^{id,-} $ & 
    $r^2 = s^2 = id$ & 
    $\operatorname{Aut}(\mathcal{A}) \cong D_{\infty}$ \\
    \midrule

    $(2,2,1,2)$ \par $d_1 = d_3$ & 
    $x = g_{2}^{(13),-},\; y = g_{31}^{id,-},\;$\par \vspace{0.12cm}$ z = g_{1241}^{(13)(24),+}$ & 
    $x^2 = y^2 = id$, \par \vspace{0.12cm} $z^{-1} x z = x$, \par \vspace{0.12cm} $z^{-1} y z = y$ & 
    $\operatorname{Aut}(\mathcal{A}) \cong Z \rtimes_\rho D_\infty$ \\
    \midrule

    $(2,2,2,4)$ \par $d_1 = d_3$ & 
    $x = g_{2}^{(13),-},\; y = g_{3}^{(34),+}$, \par\vspace{0.10cm} 
      & 
    $x^2 = (x y x y^{-1} x)^2 = id$, \par \vspace{0.12cm} $(y x y x y^{-1} x)^2 = id$ & 
    $\langle g_{21}^{(143),+},g_{2}^{(13),-} \rangle \cong D_\infty$ \\
    \midrule

    $(2,2,2,4)$ \par $d_1 \neq d_3$ & 
    $a = g_{1}^{(34),-},\; b = g_{13}^{id,-} $, \par\vspace{0.10cm} 
    $c = g_{2312}^{id,-},  g_{2432}^{id,-}$ & 
    $a^2 = b^2 = id$, \par \vspace{0.12cm} $(a c a c b)^2 = id$ & 
    $\langle g_{13}^{id,-}, g_{2312}^{id,-} \rangle \cong D_\infty$ \\
    \midrule

    $(1,1,2,2,2)$ & 
    $a = g_{2}^{(13),-},\; b = g_{31}^{(13),-}$, \par\vspace{0.10cm} 
    $c = g_{4213421}^{(14),-},\; z = \psi_{(13)}^{+}$ & 
    $a^2 = b^2 = c^2 = z^2 = id$, \par \vspace{0.12cm} $az = za$, $bz = zb$, $cz = zc$ & 
    $\langle g_{2}^{(13),-}, g_{31}^{(13),-} \rangle \cong D_\infty$ \\
    \midrule

    $(2,2,2,2,4)$ & 
    $x = \psi_{(13)}^{+},\; y = \psi_{(24)}^{-},\;$\par \vspace{0.12cm}$ a = \psi_{(24)}^{-} \circ g_{2}^{(24),+}$, \par\vspace{0.10cm} 
    $b = g_{31}^{(13),-},\; c =\psi_{(13)}^{+} \circ g_{1421}^{(24),+}$ & 
    $x^2 = y^2 = a^2 = b^2 = id$, \par \vspace{0.12cm} $c^4 = (x c)^2 = id$, \par \vspace{0.12cm} $xy = yx$, $xa = ax$, $xb = bx$, \par \vspace{0.12cm} $yb = by$, $yc = cy$, $ca = ac$, \par \vspace{0.12cm} $bc = cb$ & 
    $\langle \psi_{(24)}^{-},\; g_{2}^{(24),+} \rangle \cong D_\infty$ \\
\end{longtable}

We obtain the following theorem from the analysis of Table~\ref{tab:cluster_aut}.

\begin{theorem}\label{okk}
Suppose that $\mathcal{A}$ is a cluster algebra of finite mutation type of rank $4$ and $\operatorname{Aut}(\mathcal{A})$ is the cluster automorphism group of $\mathcal{A}$. Then
\begin{enumerate}[label=(\arabic*)]
    \item $\operatorname{Aut}(\mathcal{A})$ is a finite group if and only if $\mathcal{A}$ is of finite type.
    \item $\operatorname{Aut}(\mathcal{A})$ has a subgroup isomorphic to a dihedral group.
\end{enumerate}
\end{theorem}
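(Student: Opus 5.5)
The proof is a case-by-case reading of Table~\ref{tab:cluster_aut}. This is legitimate because the finite-mutation-type cluster algebras of rank $4$ with indecomposable exchange matrices are exactly the rows listed, by the classification of \cite{felikson2012cluster} recorded in Table~\ref{jjjsblocks}. For each row, Theorem~\ref{thm:refined-generator00} together with Algorithms~\ref{ii}, \ref{iii} and \ref{alg:reduce} provides a generating set of $\operatorname{Aut}(\mathcal{A})$, and the essential generators in Table~\ref{tab:aut} already generate the group. So everything reduces to checking, row by row, the listed relations and the isomorphism types in the last column. Decomposable exchange matrices (disjoint unions of lower-rank diagrams) I would either exclude, as Section~\ref{s5} does, or handle separately using the rank $\le 3$ results of \cite{FuLiang}.

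For (1), I would split the rows into the finite types $\mathbb{A}_4,\mathbb{D}_4,\mathbb{B}_4,\mathbb{C}_4,\mathbb{F}_4$ and the remaining (affine, extended-affine, exceptional) types.

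\emph{Finite types.} Table~\ref{tab:cluster_aut} gives $D_7$, $S_3\times D_4$, $D_5$, $D_5$ and $D_7$, all of which are finite. To verify, for example, $y^7=\mathrm{id}$ with $y=g_{31}^{(14)(23),-}$, I would compose using Lemma~\ref{rem:3.10}(2),(1). This turns $y^k$ into a quadruple $(\Sigma_{t_0},\Sigma_{t},\sigma^k,\varepsilon^k)$ with $t=\mu_{\sigma^{k-1}(p)}\cdots\mu_{p}(t_0)$. Then I would check that the resulting cluster equals $\mathbf{x}_{t_0}$ up to the permutation. The relation $yx=xy^{-1}$ is checked the same way.

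\emph{Infinite types.} For every non-finite row, the last column exhibits a subgroup isomorphic to $D_\infty$, which is infinite. It suffices to show that some listed element has infinite order, for instance $rs$ for the two involutions generating $D_\infty$. I would argue this via a growth statement: in a non-finite-type cluster algebra there are infinitely many cluster variables, and iterating $rs$ along its mutation path produces clusters whose variables have strictly increasing denominator degree (or $\mathbf{g}$-vectors of unbounded norm). Consequently $(rs)^k\neq \mathrm{id}$ for all $k\ge1$.

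A more conceptual alternative avoids the table for (1). Since $\operatorname{Aut}(\mathcal{A})$ acts on the exchange graph, and finite mutation type gives finitely many exchange matrices up to $\sim\pm$, the group acts with finitely many orbits on seeds. If the group were finite, there would be only finitely many seeds and hence $\mathcal{A}$ would be of finite type. Conversely, finite type means finitely many seeds, and a cluster automorphism is determined by the image of one seed, so the group is finite. I would present this argument as the main proof of (1), since it avoids computation. The one step to justify is that the orbits are finite in number: two seeds $\Sigma_t,\Sigma_{t'}$ with $B_{t'}\sim\pm B_t$ are related by the automorphism $(\Sigma_t,\Sigma_{t'},\sigma,\varepsilon)$, which exists by the quadruple description recalled in Section~\ref{s2}.

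For (2), the proof is a direct reading of the last column of Table~\ref{tab:cluster_aut}. In each finite case the whole group is $D_m$ or contains $D_4$; in each infinite case two involutions are exhibited, and their product has infinite order by the growth argument above, so they generate $D_\infty$. The only remaining checks are that the named elements square to the identity, which is a single application of Lemma~\ref{rem:3.10}(2).

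The main obstacle I expect is the infinite-order claim in the non-finite rows. For the skew-symmetrizable exceptional types ($\mathbb{G}_2^{(*,\pm)}$, $(2,2,2,4)$, etc.), I would first try to show that the path of $rs$ iterated gives a nontrivial translation in the exchange graph, e.g.\ by tracking one cluster variable's denominator vector and showing it grows linearly in $k$. If that fails, I would fall back on the orbit-finiteness argument, which forces infiniteness of the whole group but not of the specific dihedral subgroup.
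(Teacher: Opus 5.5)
Your proof of (1) takes a genuinely different route from the paper's, and it is correct. The paper derives the whole theorem from Table~\ref{tab:cluster_aut}:
\begin{itemize}
\item finiteness for $\mathbb{A}_4,\mathbb{D}_4,\mathbb{B}_4,\mathbb{C}_4,\mathbb{F}_4$ is read off from the computed groups;
\item infiniteness for the other rows comes from the listed $D_\infty$ subgroups, whose infinite order is established type by type in Appendix~\ref{app:inf} through explicit $\mathbf{d}$-vector recursions.
\end{itemize}

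Your orbit argument bypasses all of this. Finite mutation type gives finitely many $\sim\pm$-classes among the $B_t$. Any two seeds in one class are related by a cluster automorphism. That existence fact is the standard Assem--Schiffler--Shramchenko lemma, which the paper uses implicitly when defining $f_{p(t_0,t)}$; it is not the uniqueness statement of Section~\ref{s2} that you cite. Hence $\operatorname{Aut}(\mathcal{A})$ has finitely many orbits on clusters, and a finite group forces finitely many clusters, i.e.\ finite type. Conversely, an automorphism is determined by the labeled cluster $\sigma f(\mathbf{x}_{t_0})$, and in finite type there are only finitely many of these.

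This argument needs no computer output, no indecomposability and no restriction on the rank. It also covers rows where the paper's infinite-order verification is missing: there is no computation at all for $\mathbb{G}_2^{(*,*)}$, and the one for $(2,2,1,2)$ with $d_1=d_3$ is skipped. What the paper's route buys in exchange is the actual group structure. Your argument does not see that structure, and (2) needs it.

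For (2) your plan coincides with the paper's: read the last column of the table, and for the $D_\infty$ entries show that some $\mathbf{d}$-vector grows linearly along the iterated mutation path. Your stated reason for that growth (``infinitely many cluster variables, so iterating $rs$ gives increasing denominators'') is not an argument. An infinite $\operatorname{Aut}(\mathcal{A})$ still contains many elements of finite order, so growth must be checked for the specific element, as Appendix~\ref{app:inf} does.

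Note, however, that (2) as literally stated does not need infinite order. Two distinct non-identity involutions $r\neq s$ always generate a dihedral group $D_m$, where $m$ is the order of $rs$ (finite or infinite). The reason is that a cyclic group contains at most one involution, so $s\notin\langle rs\rangle$. It therefore suffices to exhibit such a pair and check $r^2=s^2=\mathrm{id}$ via Lemma~\ref{rem:3.10}(2). The $\mathbf{d}$-vector computation is only needed to identify the subgroup as $D_\infty$, as the table does.
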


\appendix

\section*{Appendix}
\addcontentsline{toc}{section}{Appendix}

\section{Classification of finite mutation type of rank 4}
\label{s3}

\subsection*{Skew-symmetric matrices}  
In Figure~\ref{xieduicheng}, we present all diagrams of skew-symmetric matrices of finite mutation types of rank 4 cf.\cite{fomin2007cluster111}.  
\begin{figure}[H]
\centering
\setlength{\abovecaptionskip}{6pt}
\setlength{\belowcaptionskip}{6pt}

\vspace{6mm}
\begin{tabular}{cccc}
\includegraphics[width=3cm]{a4.pdf}
&
\includegraphics[width=3cm]{d4.pdf}
&
\includegraphics[width=3cm]{a13.pdf}
&
\includegraphics[width=3cm]{a22.pdf}
\\
\\
\vspace{6mm}
$\mathbb{A}_4$& $\mathbb{D}_4$ &$\tilde{\mathbb{A}}_{1,3}$&$\tilde{\mathbb{A}}_{2,2}$ \\
\end{tabular}
\caption{Diagrams of skew-symmetric matrices of rank 4.}
\label{xieduicheng}
\vspace{5mm}
\end{figure}

\subsection*{Not skew-symmetric matrices}
In this subsection, we present the classification of finite mutation types of rank 4 with skew-symmetrizable but not skew-symmetric matrices via s-decomposition in~\cite{felikson2012cluster}. 

\begin{definition}
A \textbf{block} is a diagram isomorphic to one of the diagrams with black/white colored vertices shown in Table~\ref{tab:all-blocks-3x5}, or to a single vertex. Vertices marked in white are called \textbf{outlets}, we call the black ones \textbf{dead ends}.
\end{definition}

\begin{definition}
A connected diagram $S$ is called \textbf{s-decomposable} if it can be obtained from a collection of blocks by identifying outlets of different blocks along some partial matching (matching of outlets of the same block is not allowed), where two single edges with same endpoints and opposite directions cancel out, and two single edges with same endpoints and same directions form an edge of weight $4$. 

A non-connected diagram $S$ is called s-decomposable either if $S$ satisfies the definition above, or if $S$ is a disjoint union of several diagrams (without any edge joining one to another) satisfying the definition above.
 
If a diagram $S$ is not s-decomposable then we call $S$ \textbf{non-decomposable}. 
\end{definition}

\begin{remark}
We call a vertex $u \in S$ an \emph{outlet} of $S$ if $S$ is block-decomposable and there exists a block decomposition of $S$ such that $u$ is contained in exactly one block $B$, and $u$ is an outlet of $B$.
\end{remark}


\begin{table}[H]  
\centering
\setlength{\abovecaptionskip}{6pt}   
\setlength{\belowcaptionskip}{6pt}   
\vspace{6mm} 

\begin{tabular}{|c|c|c|c|c|}
\hline
\includegraphics[width=1.9cm]{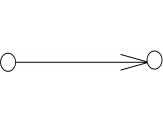}
& \includegraphics[width=1.9cm]{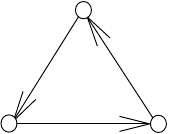}
& \includegraphics[width=1.9cm]{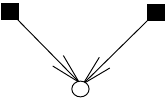}
& \includegraphics[width=1.9cm]{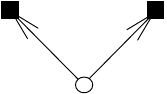}
& \includegraphics[width=1.9cm]{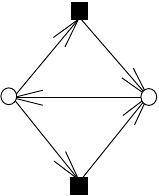}
\\[1mm]  
\hline
\includegraphics[width=1.9cm]{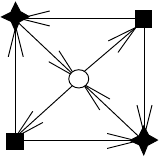}
& \includegraphics[width=1.9cm]{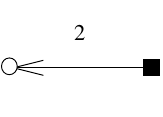}
& \includegraphics[width=1.9cm]{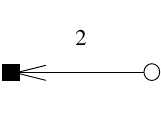}
& \includegraphics[width=1.9cm]{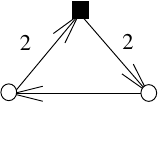}
& \includegraphics[width=1.9cm]{block51t.pdf}
\\[1mm]
\hline
\includegraphics[width=1.9cm]{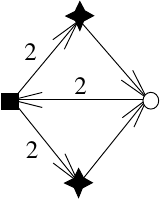}
& \includegraphics[width=1.9cm]{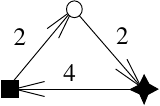}
& \includegraphics[width=1.9cm]{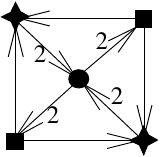}
& \includegraphics[width=1.9cm]{block6t2.pdf}
& \includegraphics[width=1.9cm]{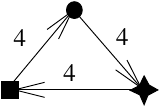}
\\[1mm]
\hline
\end{tabular}

\caption{Blocks.(Outlets are colored white, dead ends are black.)}
\label{tab:all-blocks-3x5}  
\vspace{6mm}
\end{table}

The following are all diagrams of finite mutation exchange matrices of rank 4 which is skew-symmetrizable but not skew-symmetric in Figure~\ref{teshutu}, cf.\cite{felikson2012cluster}.

\begin{figure}[H]
\centering
\setlength{\abovecaptionskip}{6pt}
\setlength{\belowcaptionskip}{6pt}
\vspace{6mm}

\begin{tabular}{ccc}

\includegraphics[width=3cm]{g2.pdf}
&
\includegraphics[width=3cm]{g21.pdf}
&
\includegraphics[width=3cm]{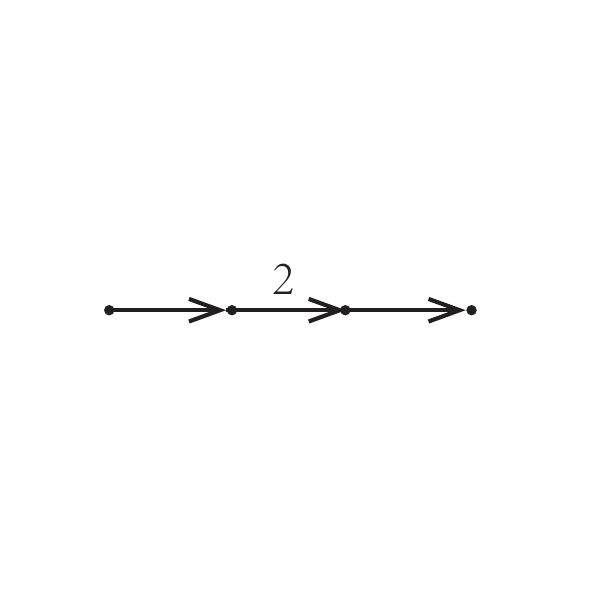}
\\  
\vspace{10mm}
$G(*,*)$& $G(*,+)$ &$F_4$ \\
\end{tabular}
\caption{Exceptional diagrams of rank 4. }
\label{teshutu}
\vspace{6mm}
\end{figure}

In Table~\ref{jjjsblocks}, we present all s-decomposable diagrams of rank 4. Furthermore, up to sign permutation equivalence, we select one representative from each mutation-equivalent class and mark it in red.

\begin{table}[H]
\centering
\setlength{\abovecaptionskip}{10pt}
\setlength{\belowcaptionskip}{10pt}

\vspace{-50pt}
\caption{S-decomposable diagrams.} 
\label{jjjsblocks}

\begin{tabular}{l}

\includegraphics[width=12.8cm]{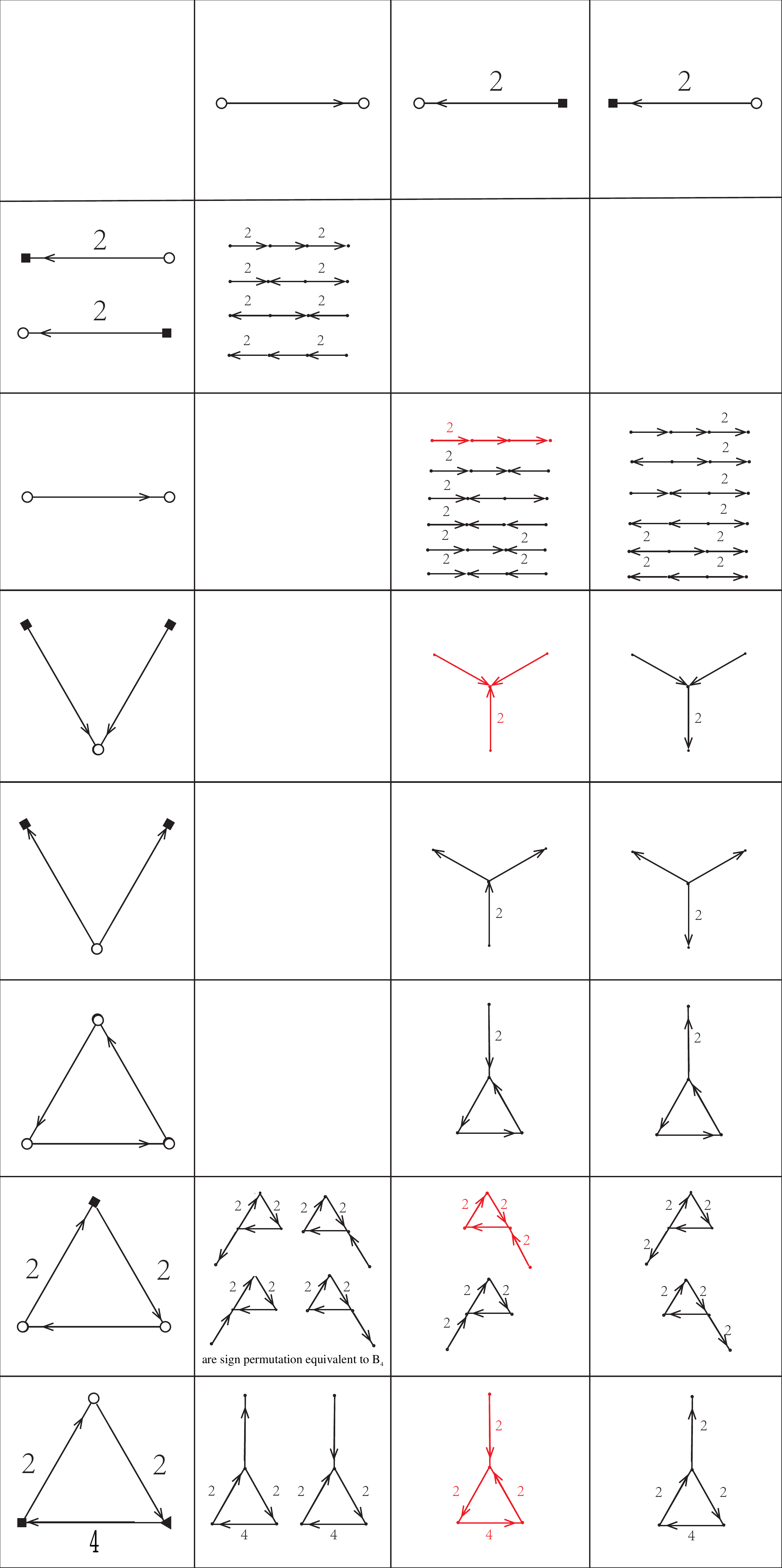}
\\ 

\end{tabular}

\end{table}

\section{Proofs of the Infinite Order of Generators}\label{app:inf}
Firstly, we recall the definition of $\mathbf{d}$-vectors, which were introduced by Fomin and Zelevinsky in \cite{FominZelevinsky2002}.

\begin{definition}
    Let $\Sigma$ be a cluster pattern of rank $n$. For any cluster variable $x_{i;t}$, denote by $\mathbf{d}_{i;t} = (d_1, \ldots, d_n)$ the integer vector such that $-d_k$ is the lowest degree of $x_{k;t_0}$ in the Laurent polynomial expression of $x_{i;t}$ in $\mathbf{x}_{t_0}$. We call $\mathbf{d}_{i;t}$ the denominator vector of $x_{i;t}$ with respect to $t_0$.
\end{definition}

The following recursion formula for $\mathbf{d}$-vectors is useful.

\begin{lemma}
     Let $\Sigma$ be a cluster pattern of rank $n$. For any pair of vertices $t, t' \in \mathbb{T}_n$ that are $k$-adjacent, the d-vectors of them satisfy the following relation:
\[
\mathbf{d}_{k;t'} = -\mathbf{d}_{k;t} + \max\left\{ \sum_{i=1}^n [b_{ik;t}]_+ \mathbf{d}_{i;t}, \sum_{i=1}^n [-b_{ik;t}]_+ \mathbf{d}_{i;t} \right\},
\]
where $\max\{\alpha, \beta\} = (\max\{a_1, b_1\}, \ldots, \max\{a_n, b_n\})$ for $\alpha = (a_1, \ldots, a_n)$ and $\beta = (b_1, \ldots, b_n)$.
\end{lemma}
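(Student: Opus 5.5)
The plan is to read off the lowest $x_{j;t_0}$-degrees from the exchange relation between $t$ and $t'$. Three inputs are needed: the Laurent phenomenon, additivity of lowest degree under products, and positivity of Laurent coefficients. Fix an index $j\in\{1,\dots,n\}$. For a nonzero Laurent polynomial $F\in\mathbb{Z}[\mathbf{x}_{t_0}^{\pm1}]$, let $\nu_j(F)$ be the lowest exponent of $x_{j;t_0}$ occurring in $F$. By definition the $j$-th entry of $\mathbf{d}_{i;t}$ is $-\nu_j(x_{i;t})$, and the Laurent phenomenon guarantees that every cluster variable lies in $\mathbb{Z}[\mathbf{x}_{t_0}^{\pm1}]$, so this is well defined.

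First I show $\nu_j(FG)=\nu_j(F)+\nu_j(G)$. Write $F=\sum_{m\ge \nu_j(F)} F_m\, x_{j;t_0}^m$, where each $F_m$ is a Laurent polynomial in the other initial variables, and similarly for $G$. The coefficient of $x_{j;t_0}^{\nu_j(F)+\nu_j(G)}$ in $FG$ is $F_{\nu_j(F)}G_{\nu_j(G)}$. This product is nonzero because the ring of Laurent polynomials in the remaining variables is an integral domain. Consequently $\nu_j$ of a monomial $\prod_i x_{i;t}^{a_i}$ with $a_i\ge 0$ equals $\sum_i a_i\,\nu_j(x_{i;t})$.

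Next I apply $\nu_j$ to both sides of the exchange relation
\[
x_{k;t}\,x_{k;t'} = M_+ + M_-,\qquad M_+=\prod_i x_{i;t}^{[b_{ik;t}]_+},\quad M_-=\prod_i x_{i;t}^{[-b_{ik;t}]_+}.
\]
All terms are Laurent polynomials in $\mathbf{x}_{t_0}$. Working with this product form, rather than the quotient $x_{k;t'}=(M_++M_-)/x_{k;t}$, avoids having to take $\nu_j$ of a quotient. The left side gives $\nu_j(x_{k;t})+\nu_j(x_{k;t'})$. For the right side I need
\[
\nu_j(M_++M_-)=\min\{\nu_j(M_+),\nu_j(M_-)\}.
\]

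This equality is the main obstacle. In general $\nu_j$ of a sum could exceed the minimum, because the two lowest-degree parts might cancel. I will rule this out with the positivity theorem (Lee--Schiffler in the skew-symmetric case, Gross--Hacking--Keel--Kontsevich in the skew-symmetrizable case). By positivity, $M_+$ and $M_-$ are products of Laurent polynomials with nonnegative coefficients, so their lowest-degree parts $F_{\nu_j}$ are nonzero with nonnegative coefficients. Their sum is therefore nonzero, and no cancellation occurs. If one prefers to avoid positivity, I would fall back on the argument of Fomin--Zelevinsky (Cluster algebras IV), who establish this recursion by a direct argument.

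Finally I combine the pieces. Negating everything turns the minimum into a maximum, and the monomial formula gives $-\nu_j(M_\pm)=\sum_i[\pm b_{ik;t}]_+ (\mathbf{d}_{i;t})_j$. This yields
\[
(\mathbf{d}_{k;t'})_j=-(\mathbf{d}_{k;t})_j+\max\Big\{\sum_i[b_{ik;t}]_+(\mathbf{d}_{i;t})_j,\ \sum_i[-b_{ik;t}]_+(\mathbf{d}_{i;t})_j\Big\}.
\]
Since $j$ was arbitrary, the entrywise maximum in the statement follows.
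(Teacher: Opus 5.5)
The paper gives no proof of this lemma. It quotes the recursion without argument, as the standard recursion for denominator vectors in the Fomin--Zelevinsky framework it cites. So there is no in-paper proof to compare with, and your proposal supplies a complete and correct one.

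The steps are sound:
\begin{itemize}
\item $\nu_j$ is additive on products because $\mathbb{Z}[x_{i;t_0}^{\pm1} : i\neq j]$ is a domain.
\item The product form of the exchange relation gives $\nu_j(x_{k;t})+\nu_j(x_{k;t'})=\nu_j(M_++M_-)$.
\item You correctly isolate the one nontrivial point: the lowest $x_{j;t_0}$-parts $L_\pm$ of $M_\pm$ must not cancel. This only matters when $\nu_j(M_+)=\nu_j(M_-)$; otherwise the minimum is attained by a single term.
\item Lee--Schiffler and Gross--Hacking--Keel--Kontsevich give nonnegative Laurent coefficients in the skew-symmetric and skew-symmetrizable cases, and this settles the non-cancellation.
\end{itemize}
Your main line is therefore complete, and the vague fallback to a ``direct argument'' in Cluster algebras IV is not needed.

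It is worth knowing that the full positivity theorem is far more than this step requires. Every cluster variable is a subtraction-free rational function of $\mathbf{x}_{t_0}$, since $x'=(M_++M_-)/x$ at each step. Hence every cluster variable, and so each $M_\pm$, takes positive values on $\mathbb{R}_{>0}^n$.

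Now let $F=\sum_{m\ge\nu}F_m\,x_{j;t_0}^m$ be positive on $\mathbb{R}_{>0}^n$. Fix the other variables at positive values and let $x_{j;t_0}\to 0^+$. This shows $F_\nu\ge 0$ on $\mathbb{R}_{>0}^{n-1}$.

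Apply this to $M_+$ and $M_-$. A cancellation $L_++L_-=0$ would force $L_+\ge 0$ and $L_+=-L_-\le 0$ on $\mathbb{R}_{>0}^{n-1}$. Then $L_+$ vanishes on an open set, so $L_+=0$, contradicting that it is a lowest part.

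Compared with your version, this route proves the recursion using only the Laurent phenomenon, independent of the deep positivity results. Your version is shorter once those theorems are granted.
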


\subsection*{$\tilde{\mathbb{A}}_{1,3}$ type}\label{app:A13}
We prove that the order of $g_1^{(24),-} \circ \psi_{(12)(34)}^{-} = g_1^{(1432),+}$ is infinite. For any $n \in Z^+, k \geq 0,$ we denote 
\[
t_n=
\begin{cases} 
(\mu_{2341})^k(t_0), &n=4k\\   
\mu_1(\mu_{2341})^k(t_0), &n=4k+1\\
\mu_{41}(\mu_{2341})^k(t_0), &n=4k+2\\
\mu_{341}(\mu_{2341})^k(t_0), &n=4k+3
\end{cases}
\]
Clearly, $\mu_{2341}(B_{t_0})=B_{t_0}$, which means the exchange matrices consist a cycle of length 4. We have
\begin{align*}
\mathbf{d}_{1;t_{n+1}} &= -\mathbf{d}_{1;t_n} + \max\{(\mathbf{d}_{2;t_{n}} + \mathbf{d}_{4;t_n}),\mathbf{0}\},  &&\mathbf{d}_{j;t_{n+1}}=\mathbf{d}_{j;t_{n}},   &&j\neq 1,n=4k,\\
\mathbf{d}_{4;t_{n+1}} &= -\mathbf{d}_{4;t_n} + \max\{(\mathbf{d}_{1;t_n} + \mathbf{d}_{3;t_n}),\mathbf{0}\},    &&\mathbf{d}_{j;t_{n+1}}=\mathbf{d}_{j;t_{n}},  &&j\neq 4,n=4k+1, \\
\mathbf{d}_{3;t_{n+1}} &= -\mathbf{d}_{3;t_n} + \max\{(\mathbf{d}_{2;t_n} + \mathbf{d}_{4;t_n}),\mathbf{0}\},    &&\mathbf{d}_{j;t_{n+1}}=\mathbf{d}_{j;t_{n}},  &&j\neq 3,n=4k+2,\\
\mathbf{d}_{2;t_{n+1}} &= -\mathbf{d}_{2;t_n} + \max\{(\mathbf{d}_{1;t_n} + \mathbf{d}_{3;t_n}),\mathbf{0}\},    &&\mathbf{d}_{j;t_{n+1}}=\mathbf{d}_{j;t_{n}},  &&j\neq 2,n=4k+3.
\end{align*}

We claim that for any $n \in Z^+, k \geq 0$, we have 
\begin{equation}
\begin{aligned}
\mathbf{d}_{2;t_{n}} &\geq \underset{i \in\{1,2,3,4\}}{\max}\{ \mathbf{d}_{i;t_{n}} \}, &&n=4k,\\
\mathbf{d}_{1;t_{n}} &\geq \underset{i \in\{1,2,3,4\}}{\max}\{ \mathbf{d}_{i;t_{n}} \}, &&n=4k+1, \\
\mathbf{d}_{4;t_{n}} &\geq \underset{i \in\{1,2,3,4\}}{\max}\{ \mathbf{d}_{i;t_{n}} \}, &&n=4k+2,\\
\mathbf{d}_{3;t_{n}} &\geq \underset{i \in\{1,2,3,4\}}{\max}\{ \mathbf{d}_{i;t_{n}} \}, &&n=4k+3,\\
\mathbf{d}_{i;t_{n}} &\geq \mathbf{0}, &&i\in\{1,2,3,4\}, n\geq 4 .
\end{aligned}
\label{eq:1}
\end{equation}

By a direct calculation, 
\begin{align*}
\mathbf{d}_{1;t_{1}} &= -\mathbf{d}_{1;t_0} + \max\{(\mathbf{d}_{2;t_{0}} + \mathbf{d}_{4;t_0}),\mathbf{0}\}=(1,0,0,0),\\
\mathbf{d}_{4;t_{2}} &= -\mathbf{d}_{4;t_1} + \max\{(\mathbf{d}_{1;t_1} + \mathbf{d}_{3;t_1}),\mathbf{0}\}=(1,0,0,1), \\
\mathbf{d}_{3;t_{3}} &= -\mathbf{d}_{3;t_2} + \max\{(\mathbf{d}_{2;t_2} + \mathbf{d}_{4;t_2}),\mathbf{0}\}=(1,0,1,1),\\
\mathbf{d}_{2;t_{4}} &= -\mathbf{d}_{2;t_3} + \max\{(\mathbf{d}_{1;t_3} + \mathbf{d}_{3;t_3}),\mathbf{0}\}=(2,1,1,1).
\end{align*}
and
\begin{align*}
&\mathbf{d}_{1;t_{1}}=(1,0,0,0),
&&\mathbf{d}_{2;t_{1}}=(0,-1,0,0),
&&\mathbf{d}_{3;t_{1}}=(0,0,-1,0),
&&\mathbf{d}_{4;t_{1}}=(0,0,0,-1), \\
&\mathbf{d}_{1;t_{2}}=(1,0,0,0),
&&\mathbf{d}_{2;t_{2}}=(0,-1,0,0),
&&\mathbf{d}_{3;t_{2}}=(0,0,-1,0),
&&\mathbf{d}_{4;t_{2}}=(1,0,0,1), \\
&\mathbf{d}_{1;t_{3}}=(1,0,0,0),
&&\mathbf{d}_{2;t_{3}}=(0,-1,0,0),
&&\mathbf{d}_{3;t_{3}}=(1,0,1,1),
&&\mathbf{d}_{4;t_{3}}=(1,0,0,1),\\
&\mathbf{d}_{1;t_{4}}=(1,0,0,0),
&&\mathbf{d}_{2;t_{4}}=(2,1,1,1),
&&\mathbf{d}_{3;t_{4}}=(1,0,1,1),
&&\mathbf{d}_{4;t_{4}}=(1,0,0,1).
\end{align*}
the above inequalities~\eqref{eq:1} hold for $n \leq 4$. We induct on $n$ and assume the statement holds for $ n>4$,  if $n=4k+1$, $\mathbf{d}_{4;t_{n+1}} = -\mathbf{d}_{4;t_n} + (\mathbf{d}_{1;t_n} + \mathbf{d}_{3;t_n}),$ and $ \mathbf{d}_{3;t_{n}}=\mathbf{d}_{3;t_{n-2}} \geq \mathbf{d}_{4;t_{n-2}}= \mathbf{d}_{4;t_{n}}$, so $ \mathbf{d}_{4;t_{n+1}} \geq \mathbf{d}_{1;t_{n}} \geq \underset{i \in\{1,2,3,4\}}{\max}\{ \mathbf{d}_{i;t_{n}} \} \geq \mathbf{0},$ which means $\mathbf{d}_{4;t_{n+1}} \geq \underset{i \in\{1,2,3,4\}}{\max}\{\mathbf{d}_{i;t_{n+1}}\} \geq \mathbf{0}.$ And the other cases are similar.  

It's easy to see that for any $n \in Z^+$ and $k \in \{1,2,3,4\}$, we have $\mathbf{d}_{k;t_{n+1}} \geq  \mathbf{d}_{k;t_n}.$ So, $(g_1^{(1432),+})^n = id$ if and only if $n=0$. And there is no $\sigma \neq id,\sigma \in S_4$ and $n\in Z^+$, let $sgn(\sigma)$ be the sign of $\sigma$, such that $(g_1^{(1432),+})^n= \psi_{\sigma}^{sgn(\sigma)}$. 

It's follows that for any $i \in Z$ and $j \in\{0, 1\}$ such that $(g_1^{(1432),+})^i \circ (\psi_{(12)(34)}^{-})^j = id$ if and only if $i=j=0$. Hence the order of $g_1^{(1432),+}$ is infinite.

\subsection*{$\tilde{\mathbb{A}}_{2,2}$ type}\label{app:A22}
We prove that $cba=g_{31}^{(4321),-}$ has infinite order. From $\mu_{42132431}(B_{t_0})=(B_{t_0})$, the exchange matrices consist a cycle of length 8. 
For any $n \in Z^+, k \geq 0,$ we denote 
\[
t_n=
\begin{cases} 
(\mu_{42132431})^k(t_0), &n=10k\\   
\mu_1(\mu_{42132431})^k(t_0), &n=10k+1\\
\mu_{31}(\mu_{42132431})^k(t_0), &n=10k+2\\
\mu_{431}(\mu_{42132431})^k(t_0), &n=10k+3\\
\mu_{2431}(\mu_{42132431})^k(t_0), &n=10k+4\\
\mu_{32431}(\mu_{42132431})^k(t_0), &n=10k+5\\  
\mu_{132431}(\mu_{42132431})^k(t_0), &n=10k+6\\
\mu_{2132431}(\mu_{42132431})^k(t_0), &n=10k+7
\end{cases}
\]
We claim that for any $n \geq 8$, we have 
\begin{equation}
\begin{aligned}
&\mathbf{d}_{i;t_{n+8k}} -\mathbf{d}_{i;t_{n}} = k(2,2,2,2),&&i\in \{ 1,2,3,4\}, k\geq 1.
\end{aligned}
\label{eq:0-0}
\end{equation}
The proof of the relations~\eqref{eq:0-0} is similar to Section~\ref{app:A13}. Consequently, the order of $cba$ is infinite.

\subsection*{$\mathbb{D}_{4}(2,1,1)$ type}\label{app:D4_211}
We prove that the order of $g_{24231}^{(24),-}$ is infinite. For any $n \in Z^+, k \geq 0,$ we denote 
\[
t_n=
\begin{cases} 
(\mu_{4243124231})^k(t_0), &n=10k\\   
\mu_1(\mu_{4243124231})^k(t_0), &n=10k+1\\
\mu_{31}(\mu_{4243124231})^k(t_0), &n=10k+2\\
\mu_{231}(\mu_{4243124231})^k(t_0), &n=10k+3\\
\mu_{4231}(\mu_{4243124231})^k(t_0), &n=10k+4\\
\mu_{24231}(\mu_{4243124231})^k(t_0), &n=10k+5\\  
\mu_{124231}(\mu_{4243124231})^k(t_0), &n=10k+6\\
\mu_{3124231}(\mu_{4243124231})^k(t_0), &n=10k+7\\
\mu_{43124231}(\mu_{4243124231})^k(t_0), &n=10k+8\\
\mu_{243124231}(\mu_{4243124231})^k(t_0), &n=10k+9
\end{cases}
\]
From $\mu_{24231}(B_{t_0})=(24)(B_{t_0})$, the exchange matrices consist a cycle of length 10. We have
\begin{equation*}
\begin{aligned}
\mathbf{d}_{1;t_{n+1}} &= -\mathbf{d}_{1;t_n} + \max\{\mathbf{d}_{2;t_{n}} ,\mathbf{0}\},\quad  \mathbf{d}_{j;t_{n+1}}=\mathbf{d}_{j;t_{n}},   &&j\neq 1,n=10k,\\
\mathbf{d}_{3;t_{n+1}} &= -\mathbf{d}_{3;t_n} + \max\{\mathbf{d}_{2;t_n} ,\mathbf{0}\}, \quad  \mathbf{d}_{j;t_{n+1}}=\mathbf{d}_{j;t_{n}},   &&j\neq 3,n=10k+1, \\
\mathbf{d}_{2;t_{n+1}} &= -\mathbf{d}_{2;t_n} + \max\{(2\mathbf{d}_{1;t_n} + \mathbf{d}_{3;t_n}),\mathbf{d}_{4;t_n}\}, \quad  \mathbf{d}_{j;t_{n+1}}=\mathbf{d}_{j;t_{n}},   &&j\neq 2,n=10k+2,\\
\mathbf{d}_{4;t_{n+1}} &= -\mathbf{d}_{4;t_n} + \max\{(2\mathbf{d}_{1;t_n} + \mathbf{d}_{3;t_n}),\mathbf{d}_{2;t_n}\}, \quad  \mathbf{d}_{j;t_{n+1}}=\mathbf{d}_{j;t_{n}},   &&j\neq 4,n=10k+3,\\
\mathbf{d}_{2;t_{n+1}} &= -\mathbf{d}_{2;t_n} + \max\{\mathbf{d}_{4;t_n},\mathbf{0}\}, \quad  \mathbf{d}_{j;t_{n+1}}=\mathbf{d}_{j;t_{n}},   &&j\neq 2,n=10k+4,\\
\mathbf{d}_{1;t_{n+1}} &= -\mathbf{d}_{1;t_n} + \max\{\mathbf{d}_{4;t_{n}} ,\mathbf{0}\}, \quad  \mathbf{d}_{j;t_{n+1}}=\mathbf{d}_{j;t_{n}},   &&j\neq 1,n=10k+5,\\
\mathbf{d}_{3;t_{n+1}} &= -\mathbf{d}_{3;t_n} + \max\{\mathbf{d}_{4;t_n} ,\mathbf{0}\}, \quad  \mathbf{d}_{j;t_{n+1}}=\mathbf{d}_{j;t_{n}},   &&j\neq 3,n=10k+6, 
\end{aligned}
\end{equation*}
\begin{equation}
\begin{aligned}
\mathbf{d}_{4;t_{n+1}} &= -\mathbf{d}_{4;t_n} + \max\{(2\mathbf{d}_{1;t_n} + \mathbf{d}_{3;t_n}),\mathbf{d}_{2;t_n}\}, \quad  \mathbf{d}_{j;t_{n+1}}=\mathbf{d}_{j;t_{n}},   &&j\neq 4,n=10k+7,\\
\mathbf{d}_{2;t_{n+1}} &= -\mathbf{d}_{2;t_n} + \max\{(2\mathbf{d}_{1;t_n} + \mathbf{d}_{3;t_n}),\mathbf{d}_{4;t_n}\}, \quad  \mathbf{d}_{j;t_{n+1}}=\mathbf{d}_{j;t_{n}},   &&j\neq 2,n=10k+8,\\
\mathbf{d}_{4;t_{n+1}} &= -\mathbf{d}_{4;t_n} + \max\{\mathbf{d}_{2;t_n},\mathbf{0}\},\quad  \mathbf{d}_{j;t_{n+1}}=\mathbf{d}_{j;t_{n}},   &&j\neq 4,n=10k+9.
\end{aligned}
\label{eq:bzd}
\end{equation}

We claim that for any $n \geq 10$, we have 
\begin{equation}
\begin{aligned}
&\mathbf{d}_{i;t_{n}} \geq \mathbf{0}, &&i \in \{1,2,3,4\},\\
&(2\mathbf{d}_{1;t_n} + \mathbf{d}_{3;t_n}) \geq \mathbf{d}_{4;t_n}, \\
&(2\mathbf{d}_{1;t_n} + \mathbf{d}_{3;t_n}) \geq\mathbf{d}_{2;t_n}, \\
&\mathbf{d}_{2;t_{n+10k}} -\mathbf{d}_{2;t_{n}} = k(-4\mathbf{d}_{1;t_n}+ 4\mathbf{d}_{2;t_n} -2\mathbf{d}_{3;t_n} -2\mathbf{d}_{4;t_n}),&&k\geq 1,n \in 10Z^+,\\
&\mathbf{d}_{4;t_{n+10k}} -\mathbf{d}_{4;t_{n}} = k(-4\mathbf{d}_{1;t_n}+ 4\mathbf{d}_{2;t_n} -2\mathbf{d}_{3;t_n} -2\mathbf{d}_{4;t_n}),&&k\geq 1,n \in \{5,15,25,35\cdots\}.
\end{aligned}
\label{eq:2}
\end{equation}
Indeed, $n=[10,20] \cap Z$, the above relations~\eqref{eq:2} hold.

Firstly, we prove the inequalities in~\eqref{eq:2}, we induct on $n$ and assume the statement holds for $ n\geq 20$, if $n=10k+2$, we have $(2\mathbf{d}_{1;t_{n+1}} + \mathbf{d}_{3;t_{n+1}})=(2\mathbf{d}_{1;t_{n}} + \mathbf{d}_{3;t_{n}}) \geq -\mathbf{d}_{2;t_n} + (2\mathbf{d}_{1;t_n} + \mathbf{d}_{3;t_n})=\mathbf{d}_{2;t_{n+1}}  \geq \mathbf{0}$ and $ \mathbf{d}_{i;t_{n}} \geq \mathbf{0}, (i \in \{1,3,4\})$. The other cases are similar.

Then, we prove the equalities in~\eqref{eq:2}.
By~\eqref{eq:bzd}, for any $n\in 10Z^+$, we have 
\begin{align*}
\mathbf{d}_{1;t_{n+10}} &=\mathbf{d}_{1;t_{n+6}}  \\
                            &=-\mathbf{d}_{1;t_{n+5}} + \mathbf{d}_{4;t_{n+5}} \\
                            &=-\mathbf{d}_{1;t_{n+1}} +\mathbf{d}_{4;t_{n+4}} \\
                            &=-(-\mathbf{d}_{1;t_{n}} +\mathbf{d}_{2;t_{n}} )+(-\mathbf{d}_{4;t_{n+3}} +2\mathbf{d}_{1;t_{n+3}}+\mathbf{d}_{3;t_{n+3}})\\
                            &=\mathbf{d}_{1;t_{n}} -\mathbf{d}_{2;t_{n}} -\mathbf{d}_{4;t_{n}} +2(-\mathbf{d}_{1;t_{n}} +\mathbf{d}_{2;t_{n}})+\mathbf{d}_{3;t_{n+2}}\\
                            &=-\mathbf{d}_{1;t_{n}} +\mathbf{d}_{2;t_{n}} -\mathbf{d}_{4;t_{n}} +(-\mathbf{d}_{3;t_{n+1}}+\mathbf{d}_{2;t_{n+1}})\\
                            &=-\mathbf{d}_{1;t_{n}} +\mathbf{d}_{2;t_{n}} -\mathbf{d}_{4;t_{n}} -\mathbf{d}_{3;t_{n}}+\mathbf{d}_{2;t_{n}}\\                            
                            &=-\mathbf{d}_{1;t_{n}} +2\mathbf{d}_{2;t_{n}} -\mathbf{d}_{3;t_{n}}-\mathbf{d}_{4;t_{n}}.
\end{align*}
Similarly, if $n\in 10Z^+$, the following equations hold:
\begin{equation}
\begin{aligned}
\mathbf{d}_{1;t_{n+10}} &= -\mathbf{d}_{1;t_n} +2\mathbf{d}_{2;t_{n}} -\mathbf{d}_{3;t_{n}}-\mathbf{d}_{4;t_{n}},\\
\mathbf{d}_{2;t_{n+10}} &= -4\mathbf{d}_{1;t_n} +5\mathbf{d}_{2;t_{n}} -2\mathbf{d}_{3;t_{n}}-2\mathbf{d}_{4;t_{n}},\\
\mathbf{d}_{3;t_{n+10}} &= -2\mathbf{d}_{1;t_n} +2\mathbf{d}_{2;t_{n}} -\mathbf{d}_{4;t_{n}},\\
\mathbf{d}_{4;t_{n+10}} &= -2\mathbf{d}_{1;t_n} +2\mathbf{d}_{2;t_{n}} -\mathbf{d}_{3;t_{n}}.
\end{aligned}
\label{eq:bzd2}
\end{equation}
By~\eqref{eq:bzd2}, for any $n\in 10Z^+, k\geq 1$, we have
 \[\mathbf{d}_{2;t_{n+10k}}-\mathbf{d}_{2;t_{n+10k-10}} = -4\mathbf{d}_{1;t_n} +4\mathbf{d}_{2;t_{n}} -2\mathbf{d}_{3;t_{n}}-2\mathbf{d}_{4;t_{n}}.\]               
What's more,
\begin{align*}
&\quad-4\mathbf{d}_{1;t_{n+10}} +4\mathbf{d}_{2;t_{n+10}} -2\mathbf{d}_{3;t_{n+10}}-2\mathbf{d}_{4;t_{n+10}}\\
&=-4(-\mathbf{d}_{1;t_n} +2\mathbf{d}_{2;t_{n}} -\mathbf{d}_{3;t_{n}}-\mathbf{d}_{4;t_{n}}) +4( -4\mathbf{d}_{1;t_n} +5\mathbf{d}_{2;t_{n}} -2\mathbf{d}_{3;t_{n}}-2\mathbf{d}_{4;t_{n}})\\
&\quad -2( -2\mathbf{d}_{1;t_n} +2\mathbf{d}_{2;t_{n}} -\mathbf{d}_{4;t_{n}}) -2( -2\mathbf{d}_{1;t_n} +2\mathbf{d}_{2;t_{n}} -\mathbf{d}_{3;t_{n}})\\
&=-4\mathbf{d}_{1;t_n}+4\mathbf{d}_{2;t_n}-2\mathbf{d}_{3;t_{n}}-2\mathbf{d}_{4;t_{n}}\\
                                                                                                      &=\cdots(\text{by recursion})\\
                                                                                                      &=-4\mathbf{d}_{1;t_{10}}+4\mathbf{d}_{2;t_{10}}-2\mathbf{d}_{3;t_{10}}-2\mathbf{d}_{4;t_{10}}\\
                                                                                                      &=(4,4,2,2).
\end{align*}

So, if $k\geq 1,n \in 10Z^+$, we have $\mathbf{d}_{2;t_{n+10k}} -\mathbf{d}_{2;t_{n+10k-10}} = k(-4\mathbf{d}_{1;t_n} +4\mathbf{d}_{2;t_n} -2\mathbf{d}_{3;t_n} -2\mathbf{d}_{4;t_n})$. Furthermore, if $k\geq 1$, we have $\mathbf{d}_{2;t_{10k}} -\mathbf{d}_{2;t_{10}} =(k-1)(-4\mathbf{d}_{1;t_{10}}+4\mathbf{d}_{2;t_{10}} -2\mathbf{d}_{3;t_{10}}  -2\mathbf{d}_{4;t_{10}})=(k-1)(4,4,2,2)$. 

Thus, $(g_{24231}^{(24),-})^n(x_2) = id(x_2)$ if and only if $n=0$, which means $(g_{24231}^{(24),-})^n = id$ if and only if $n=0$. Similarly, $(g_{24231}^{(24),-})^n(x_4)=x_4$ if and only if $n=0$.

What's more, ($(g_{24231}^{(24),-})^i \circ (g_2^{id,-})^j)(x_4)=(g_{24231}^{(24),-})^i (x_4)$. 
It's follows that for any $i \in Z$ and $j \in\{0, 1\}$ such that $(g_{24231}^{(24),-})^i \circ (g_2^{id,-})^j = id$ if and only if $i=j=0$. Hence the order of $g_{24231}^{(24),-}$ is infinite.

\subsection*{$\mathbb{G}_{2}^{(*,+)}$ type}\label{app:G2+}
We prove that the order of $y=g_{3}^{(34),+}$ is infinite. For any $n \in Z^+, k \geq 0,$ we denote 
\[
t_n=
\begin{cases} 
(\mu_{43})^k(t_0), &n=2k\\   
\mu_3(\mu_{43})^k(t_0), &n=2k+1
\end{cases}
\]
Clearly, $\mu_{43}(B_{t_0})=B_{t_0}$, which means the exchange matrices consist a cycle of length 2. We have
\begin{align*}
\mathbf{d}_{3;t_{n+1}} &= -\mathbf{d}_{3;t_n} + \max\{\mathbf{d}_{2;t_{n}}, 2\mathbf{d}_{4;t_n}\}, \quad  \mathbf{d}_{j;t_{n+1}}=\mathbf{d}_{j;t_{n}},   &&j\neq 3,n=2k,\\
\mathbf{d}_{4;t_{n+1}} &= -\mathbf{d}_{4;t_n} + \max\{\mathbf{d}_{2;t_n},2\mathbf{d}_{3;t_n}\}, \quad  \mathbf{d}_{j;t_{n+1}}=\mathbf{d}_{j;t_{n}},   &&j\neq 4,n=2k+1.
\end{align*}
And for any $n > 2$, it's clearly that $\mathbf{d}_{i;t_n}=\mathbf{d}_{i;t_0}\leq \mathbf{0},i\in \{1,2\}$.
And we claim that for any $n \in Z^+,k \geq 0$, we have 
\begin{equation}
\begin{aligned}
\mathbf{d}_{4;t_{n}} &\geq \underset{i \in\{1,2,3,4\}}{\max}\{ \mathbf{d}_{i;t_{n}} \}, &&n=2k,\\
\mathbf{d}_{3;t_{n}} &\geq \underset{i \in\{1,2,3,4\}}{\max}\{ \mathbf{d}_{i;t_{n}} \}, &&n=2k+1,\\
\mathbf{d}_{i;t_{n}} &\geq \mathbf{0} &&i\in\{3,4\},n\geq 2.
\end{aligned}
\label{eq:000}
\end{equation}
Indeed, $n=1,2$, the above inequalities~\eqref{eq:000} hold. We induct on $n$ and assume the statement holds for $ n=2k$. For the case $n+1$, $\mathbf{d}_{3;t_{n+1}} = -\mathbf{d}_{3;t_n} + 2\mathbf{d}_{4;t_n},$ and $\mathbf{d}_{4;t_{n}} \geq \mathbf{d}_{3;t_{n}}$, so $\mathbf{d}_{3;t_{n+1}}=(\mathbf{d}_{4;t_{n}} -\mathbf{d}_{3;t_{n}}) +\mathbf{d}_{4;t_{n}}  \geq \mathbf{d}_{4;t_{n}} =\mathbf{d}_{4;t_{n+1}} \geq \underset{i \in\{1,2,3,4\}}{\max}\{ \mathbf{d}_{i;t_{n}} \},$ then $\mathbf{d}_{3;t_{n+1}} \geq \underset{i \in\{1,2,3,4\}}{\max}\{ \mathbf{d}_{i;t_{n+1}} \}.$ And the other case is similar.  

It's easy to see that for any $n \in Z^+$ and $k \in \{1,2,3,4\}$, we have $\mathbf{d}_{k;t_{n+1}} \geq  \mathbf{d}_{k;t_n}.$ So, $y^n = id$ if and only if $n=0$, which means the order of $y$ is infinite.

\subsection*{$(2,2,1,2)$ type, $d_1 \neq d_3$}\label{app:2212_1}
We prove that the order of $g_{43241}^{(24),+}$ is infinite when $d_1=2d_2$. For any $n \in Z^+,k\geq 0,$ we denote 
\[
t_n=
\begin{cases} 
(\mu_{2342143241})^k(t_0), &n=10k\\   
\mu_1(\mu_{2342143241})^k(t_0), &n=10k+1\\
\mu_{41}(\mu_{2342143241})^k(t_0), &n=10k+2\\
\mu_{241}(\mu_{2342143241})^k(t_0), &n=10k+3\\
\mu_{3241}(\mu_{2342143241})^k(t_0), &n=10k+4\\
\mu_{43241}(\mu_{2342143241})^k(t_0), &n=10k+5\\
\mu_{143241}(\mu_{2342143241})^k(t_0), &n=10k+6\\
\mu_{2143241}(\mu_{2342143241})^k(t_0), &n=10k+7\\
\mu_{42143241}(\mu_{2342143241})^k(t_0), &n=10k+8\\
\mu_{342143241}(\mu_{2342143241})^k(t_0), &n=10k+9
\end{cases}
\]
From $\mu_{24231}(B_{t_0})=(24)(B_{t_0})$, the exchange matrices consist a cycle of length 10. We have
\begin{align*}
\mathbf{d}_{1;t_{n+1}} &= -\mathbf{d}_{1;t_n} + \max\{2\mathbf{d}_{2;t_{n}} ,\mathbf{0}\},\quad  \mathbf{d}_{j;t_{n+1}}=\mathbf{d}_{j;t_{n}},   &&j\neq 1,n=10k,\\
\mathbf{d}_{4;t_{n+1}} &= -\mathbf{d}_{4;t_n} + \max\{\mathbf{d}_{2;t_n} ,2\mathbf{d}_{3;t_n}\}, \quad  \mathbf{d}_{j;t_{n+1}}=\mathbf{d}_{j;t_{n}},   &&j\neq 4,n=10k+1, \\
\mathbf{d}_{2;t_{n+1}} &= -\mathbf{d}_{2;t_n} + \max\{\mathbf{d}_{1;t_n},\mathbf{d}_{4;t_n}\}, \quad  \mathbf{d}_{j;t_{n+1}}=\mathbf{d}_{j;t_{n}},   &&j\neq 2,n=10k+2,\\
\mathbf{d}_{3;t_{n+1}} &= -\mathbf{d}_{3;t_n} + \max\{\mathbf{d}_{4;t_n} ,\mathbf{0}\}, \quad  \mathbf{d}_{j;t_{n+1}}=\mathbf{d}_{j;t_{n}},   &&j\neq 3,n=10k+3,\\
\mathbf{d}_{4;t_{n+1}} &= -\mathbf{d}_{4;t_n} + \max\{(2\mathbf{d}_{1;t_n} + \mathbf{d}_{3;t_n}),\mathbf{d}_{2;t_n}\}, \quad  \mathbf{d}_{j;t_{n+1}}=\mathbf{d}_{j;t_{n}},   &&j\neq 4,n=10k+4,\\
\mathbf{d}_{1;t_{n+1}} &= -\mathbf{d}_{1;t_n} + \max\{2\mathbf{d}_{4;t_{n}} ,\mathbf{0}\}, \quad  \mathbf{d}_{j;t_{n+1}}=\mathbf{d}_{j;t_{n}},   &&j\neq 1,n=10k+5,\\
\mathbf{d}_{2;t_{n+1}} &= -\mathbf{d}_{2;t_n} + \max\{\mathbf{d}_{4;t_n} ,2\mathbf{d}_{3;t_n}\}, \quad  \mathbf{d}_{j;t_{n+1}}=\mathbf{d}_{j;t_{n}},   &&j\neq 2,n=10k+6, \\
\mathbf{d}_{4;t_{n+1}} &= -\mathbf{d}_{4;t_n} + \max\{\mathbf{d}_{1;t_n},\mathbf{d}_{2;t_n}\}, \quad  \mathbf{d}_{j;t_{n+1}}=\mathbf{d}_{j;t_{n}},   &&j\neq 4,n=10k+7,\\
\mathbf{d}_{3;t_{n+1}} &= -\mathbf{d}_{3;t_n} + \max\{\mathbf{d}_{2;t_n} ,\mathbf{0}\}, \quad  \mathbf{d}_{j;t_{n+1}}=\mathbf{d}_{j;t_{n}},   &&j\neq 3,n=10k+8,\\
\mathbf{d}_{2;t_{n+1}} &= -\mathbf{d}_{2;t_n} + \max\{(2\mathbf{d}_{1;t_n} + \mathbf{d}_{3;t_n}),\mathbf{d}_{4;t_n}\}, \quad  \mathbf{d}_{j;t_{n+1}}=\mathbf{d}_{j;t_{n}},   &&j\neq 2,n=10k+9.
\end{align*}
Similar to Section~\ref{app:A13}, the following relations hold:
\begin{equation}
\begin{aligned}
\mathbf{d}_{4;t_{n+30}} &= \mathbf{d}_{4;t_n} ,&&n \in 10Z^+,\\
\mathbf{d}_{4;t_{n+10}} &\geq \mathbf{d}_{4;t_n} ,&&n \in \{15,25,35,\cdots\}.
\end{aligned}
\label{eq:4}
\end{equation}
Thus, $(g_{43241}^{(24),+})^n = id$ if and only if $n=0$, and $(g_{43241}^{(24),+})^n(x_4)=x_4$ if and only if $n=0$.
What's more, ($(g_{43241}^{(24),+})^i \circ (g_{31}^{id,-})^j)(x_4)=(g_{43241}^{(24),+})^i (x_4)$. 
It's follows that for any $i \in Z$ and $j \in\{0, 1\}$ such that $(g_{43241}^{(24),+})^i \circ (g_{31}^{id,-})^j = id$ if and only if $i=j=0$. Hence the order is infinite.

\subsection*{$(2,2,1,2)$ type, $d_1 = d_3$}\label{app:2212_2}
Similar to Section~\ref{app:A13}, the order of $g_{2}^{(13),-} \circ g_{31}^{id,-}$ is infinite. (The detailed d-vector analysis is analogous to the previous cases and is omitted.)

\subsection*{$(2,2,2,4)$ type, $d_1=d_3$}\label{app:2224_1}
We prove that the orders of $y$, $xyxy^{-1}$, and $xyxyxy^{-1}x$ are infinite.

First, $y=g_{3}^{(34),+}$: For any $n \in Z^+,k\geq 0$ we denote 
\[
t_n=
\begin{cases} 
(\mu_{43})^k(t_0), &n=2k\\   
\mu_3(\mu_{43})^k(t_0), &n=2k+1
\end{cases}
\]
From $\mu_{43}(B_{t_0})=B_{t_0}$, the exchange matrices consist a cycle of length 2. We have
\begin{align*}
\mathbf{d}_{3;t_{n+1}} &= -\mathbf{d}_{3;t_n} + \max\{2\mathbf{d}_{2;t_{n}}, 2\mathbf{d}_{4;t_n}\}, \quad  \mathbf{d}_{j;t_{n+1}}=\mathbf{d}_{j;t_{n}},   &&j\neq 3,n=2k,\\
\mathbf{d}_{4;t_{n+1}} &= -\mathbf{d}_{4;t_n} + \max\{2\mathbf{d}_{2;t_n}, 2\mathbf{d}_{3;t_n}\}, \quad  \mathbf{d}_{j;t_{n+1}}=\mathbf{d}_{j;t_{n}},   &&j\neq 4,n=2k+1.
\end{align*}
And for any $n > 2$, it's clearly that  $\mathbf{d}_{i;t_n}=\mathbf{d}_{i;t_0}\leq \mathbf{0},i\in \{1,2\}$.
We claim that for any $n \in Z^+,k\geq 0$, 
\begin{equation}
\begin{aligned}
\mathbf{d}_{4;t_{n}} &\geq \underset{i \in\{1,2,3,4\}}{\max}\{ \mathbf{d}_{i;t_{n}} \}, &&n=2k,\\
\mathbf{d}_{3;t_{n}} &\geq \underset{i \in\{1,2,3,4\}}{\max}\{ \mathbf{d}_{i;t_{n}} \}, &&n=2k+1,\\
\mathbf{d}_{i;t_{n}} &\geq \mathbf{0}, &&i\in \{3,4\},n\geq 2.
\end{aligned}
\label{eq:5}
\end{equation}
The proof is by induction, analogous to the $\mathbb{G}_{2}^{(*,+)}$ case (Section~\ref{app:G2+}). Hence $y^n=id$ iff $n=0$.

Next, $xyxy^{-1}=g_{21}^{(143),+}$: For any $n \in Z^+,k\geq 0,$ denote 
\[
t_n=
\begin{cases} 
(\mu_{232421})^k(t_0), &n=6k\\   
\mu_{1}(\mu_{232421})^k(t_0), &n=6k+1\\
\mu_{21}(\mu_{232421})^k(t_0), &n=6k+2\\
\mu_{421}(\mu_{232421})^k(t_0), &n=6k+3\\   
\mu_{2421}(\mu_{232421})^k(t_0), &n=6k+4\\
\mu_{32421}(\mu_{232421})^k(t_0), &n=6k+5
\end{cases}
\]
From $\mu_{232421}(B_{t_0})=(B_{t_0})$, the cycle has length 6. For $n>8$,
\begin{align*}
\mathbf{d}_{1;t_{n+1}} &= -\mathbf{d}_{1;t_n} +2\mathbf{d}_{2;t_n}, \quad  \mathbf{d}_{j;t_{n+1}}=\mathbf{d}_{j;t_{n}},   &&j\neq 1,n=6k,\\
\mathbf{d}_{2;t_{n+1}} &= -\mathbf{d}_{2;t_n} + \mathbf{d}_{1;t_n}+\mathbf{d}_{4;t_n}, \quad  \mathbf{d}_{j;t_{n+1}}=\mathbf{d}_{j;t_{n}},   &&j\neq 2,n=6k+1,\\
\mathbf{d}_{4;t_{n+1}} &= -\mathbf{d}_{4;t_n} + 2\mathbf{d}_{2;t_n}, \quad  \mathbf{d}_{j;t_{n+1}}=\mathbf{d}_{j;t_{n}},   &&j\neq 4,n=6k+2,\\
\mathbf{d}_{2;t_{n+1}} &= -\mathbf{d}_{2;t_n} + \mathbf{d}_{4;t_n} +\mathbf{d}_{3;t_n}, \quad  \mathbf{d}_{j;t_{n+1}}=\mathbf{d}_{j;t_{n}},   &&j\neq 2,n=6k+3,\\
\mathbf{d}_{3;t_{n+1}} &= -\mathbf{d}_{3;t_n} + 2\mathbf{d}_{2;t_n}, \quad  \mathbf{d}_{j;t_{n+1}}=\mathbf{d}_{j;t_{n}},   &&j\neq 3,n=6k+4,\\
\mathbf{d}_{2;t_{n+1}} &= -\mathbf{d}_{2;t_n} + \mathbf{d}_{1;t_n}+\mathbf{d}_{3;t_n}, \quad  \mathbf{d}_{j;t_{n+1}}=\mathbf{d}_{j;t_{n}},   &&j\neq 2,n=6k+5.
\end{align*}
Similar to Section~\ref{app:A13}, we obtain
\begin{equation}
\begin{aligned}
\mathbf{d}_{2;t_{n}} &=\mathbf{d}_{2;t_{n-6}} + (2,2,2,2),&&n >8,\\
\mathbf{d}_{2;t_{n}}&\geq \mathbf{0},&&8\geq n>0.
\end{aligned}
\label{eq:6}
\end{equation}
Thus $xyxy^{-1}$ has infinite order.

The proof for $xyxyxy^{-1}x$ is analogous. Consequently all three elements have infinite order.

\subsection*{$(2,2,2,4)$ type, $d_1 \neq d_3$}\label{app:2224_2}
We prove that $bc$ has infinite order. For $n \in Z^+,k\geq 0,$ denote 
\[
t_n=
\begin{cases} 
(\mu_{231213})^k(t_0), &n=6k\\   
\mu_3(\mu_{231213})^k(t_0), &n=6k+1\\
\mu_{13}(\mu_{231213})^k(t_0), &n=6k+2\\
\mu_{213}(\mu_{231213})^k(t_0), &n=6k+3\\   
\mu_{1213}(\mu_{231213})^k(t_0), &n=6k+4\\
\mu_{31213}(\mu_{231213})^k(t_0), &n=6k+5
\end{cases}
\]
$\mu_{231213}(B_{t_0})=(B_{t_0})$, cycle length 6. For $n>6$,
\begin{align*}
\mathbf{d}_{1;t_{n+1}} &= -\mathbf{d}_{1;t_n} +2\mathbf{d}_{2;t_n}, \quad  \mathbf{d}_{j;t_{n+1}}=\mathbf{d}_{j;t_{n}},   &&j\neq 1,n=6k,\\
\mathbf{d}_{2;t_{n+1}} &= -\mathbf{d}_{2;t_n} + \mathbf{d}_{1;t_n}+\mathbf{d}_{4;t_n}, \quad  \mathbf{d}_{j;t_{n+1}}=\mathbf{d}_{j;t_{n}},   &&j\neq 2,n=6k+1,\\
\mathbf{d}_{4;t_{n+1}} &= -\mathbf{d}_{4;t_n} + 2\mathbf{d}_{2;t_n}, \quad  \mathbf{d}_{j;t_{n+1}}=\mathbf{d}_{j;t_{n}},   &&j\neq 4,n=6k+2,\\
\mathbf{d}_{2;t_{n+1}} &= -\mathbf{d}_{2;t_n} + \mathbf{d}_{4;t_n} +\mathbf{d}_{3;t_n}, \quad  \mathbf{d}_{j;t_{n+1}}=\mathbf{d}_{j;t_{n}},   &&j\neq 2,n=6k+3,\\
\mathbf{d}_{3;t_{n+1}} &= -\mathbf{d}_{3;t_n} + 2\mathbf{d}_{2;t_n}, \quad  \mathbf{d}_{j;t_{n+1}}=\mathbf{d}_{j;t_{n}},   &&j\neq 3,n=6k+4,\\
\mathbf{d}_{2;t_{n+1}} &= -\mathbf{d}_{2;t_n} + \mathbf{d}_{1;t_n}+\mathbf{d}_{3;t_n}, \quad  \mathbf{d}_{j;t_{n+1}}=\mathbf{d}_{j;t_{n}},   &&j\neq 2,n=6k+5.
\end{align*}
Similar to Section~\ref{app:A13},
\begin{equation}
\begin{aligned}
\mathbf{d}_{2;t_{n}} &=\mathbf{d}_{2;t_{n-3}} + (1,2,2,0),&&n >3,\\
\mathbf{d}_{2;t_{n}}&\geq \mathbf{0},&&3\geq n>0.
\end{aligned}
\label{eq:7}
\end{equation}
Thus $(bc)^n = id$ iff $n=0$, order infinite.

\subsection*{$(1,1,2,2,2)$ type}\label{app:11222}
We prove that $ab$, $ca$, $cb$ have infinite order.

First, $ab=g_{132}^{id,+}$: For $n>3,k\geq 0,$
\[
t_n=
\begin{cases} 
(\mu_{132})^k(t_0), &n=3k\\   
\mu_2(\mu_{132})^k(t_0), &n=3k+1\\
\mu_{32}(\mu_{132})^k(t_0), &n=3k+2
\end{cases}
\]
$\mu_{132}(B_{t_0})=(B_{t_0})$, cycle length 3. For $n>3$,
\begin{align*}
\mathbf{d}_{2;t_{n+1}} &= -\mathbf{d}_{2;t_n} +\mathbf{d}_{1;t_n}+ \mathbf{d}_{3;t_n}, \quad  \mathbf{d}_{j;t_{n+1}}=\mathbf{d}_{j;t_{n}},   &&j\neq 2,n=3k,\\
\mathbf{d}_{3;t_{n+1}} &= -\mathbf{d}_{3;t_n} + 2\mathbf{d}_{2;t_n}, \quad  \mathbf{d}_{j;t_{n+1}}=\mathbf{d}_{j;t_{n}},   &&j\neq 3,n=3k+1,\\
\mathbf{d}_{1;t_{n+1}} &= -\mathbf{d}_{1;t_n} + 2\mathbf{d}_{2;t_n}, \quad  \mathbf{d}_{j;t_{n+1}}=\mathbf{d}_{j;t_{n}},   &&j\neq 1,n=3k+2.
\end{align*}
Similar to Section~\ref{app:A13},
\begin{equation}
\begin{aligned}
\mathbf{d}_{2;t_{n}} &= (k,2k+1,k,0) ,&&n=3k+1,3k+2,3k+3.
\end{aligned}
\label{eq:8}
\end{equation}
Thus $(ab)^n = id$ iff $n=0$.

Second, $ca=g_{24213421}^{(134),+}$: From $\mu_{1}(B_{t_0})=(B_{t_0})$ with a cycle of length 24. Permuting indices by $(134)$ for $n=8k$ yields a cycle of length 8. For $n \in Z^+,k\geq 0,$
\[
t_n=
\begin{cases} 
(\mu_{24213421})^k(t_0), &n=8k\\   
\mu_{1}(\mu_{24213421})^k(t_0), &n=8k+1\\
\mu_{21}(\mu_{24213421})^k(t_0), &n=8k+2\\
\mu_{421}(\mu_{24213421})^k(t_0), &n=8k+3\\   
\mu_{3421}(\mu_{24213421})^k(t_0), &n=8k+4\\
\mu_{13421}(\mu_{24213421})^k(t_0), &n=8k+5\\
\mu_{213421}(\mu_{24213421})^k(t_0), &n=8k+6\\
\mu_{4213421}(\mu_{24213421})^k(t_0), &n=8k+7
\end{cases}
\]
Similar to Section~\ref{app:A13},
\begin{equation}
\begin{aligned}
\mathbf{d}_{1;t_{n+48}} &= \mathbf{d}_{1;t_{n}} +(9,18,9,0),&& n \geq 0,\\
\mathbf{d}_{1;t_{n}} &\geq   \mathbf{0},&& n\in Z^+.
\end{aligned}
\label{eq:9}
\end{equation}
Hence $(ca)^n = id$ iff $n=0$.

The proof for $cb$ is analogous.

\subsection*{$(2,2,2,2,4)$ type}\label{app:22224}
We prove that $ya$ and $ab$ have infinite order.

First, $ya = g_{2}^{(24),+}$: For $n \in Z^+,k\geq 0,$
\[
t_n=
\begin{cases} 
(\mu_{24})^k(t_0), &n=2k\\   
\mu_2(\mu_{24})^k(t_0), &n=2k+1
\end{cases}
\]
$\mu_{24}(B_{t_0})=(B_{t_0})$, cycle length 2. For $n>2$,
\begin{align*}
\mathbf{d}_{2;t_{n+1}} &= -\mathbf{d}_{2;t_n} + 2\mathbf{d}_{4;t_n}, \quad  \mathbf{d}_{j;t_{n+1}}=\mathbf{d}_{j;t_{n}},   &&j\neq 4,n=2k,\\
\mathbf{d}_{4;t_{n+1}} &= -\mathbf{d}_{4;t_n} + 2\mathbf{d}_{2;t_n}, \quad  \mathbf{d}_{j;t_{n+1}}=\mathbf{d}_{j;t_{n}},   &&j\neq 2,n=2k+1.
\end{align*}
Similar to Sections~\ref{app:G2+} and~\ref{app:A13},
\begin{equation}
\begin{aligned}
\mathbf{d}_{4;t_{n}} &= (0,2k,0,2k-1),&&n=2k,\\
\mathbf{d}_{2;t_{n}} &= (0,2k+1,0,2k) ,&&n=2k+1.
\end{aligned}
\label{eq:10}
\end{equation}
Thus $(ya)^n = id$ iff $n=0$.

Second, $ab = g_{312}^{(13),+}$: For $n \in Z^+,k\geq 0,$
\[
t_n=
\begin{cases} 
(\mu_{132312})^k(t_0), &n=6k\\   
\mu_{2}(\mu_{132312})^k(t_0), &n=6k+1\\
\mu_{12}(\mu_{132312})^k(t_0), &n=6k+2\\
\mu_{312}(\mu_{132312})^k(t_0), &n=6k+3\\   
\mu_{2312}(\mu_{132312})^k(t_0), &n=6k+4\\
\mu_{32312}(\mu_{132312})^k(t_0), &n=6k+5
\end{cases}
\]
$\mu_{132312}(B_{t_0})=(B_{t_0})$, cycle length 6. For $n>6$,
\begin{align*}
\mathbf{d}_{2;t_{n+1}} &= -\mathbf{d}_{2;t_n} +\mathbf{d}_{1;t_n} + \mathbf{d}_{3;t_n}, \quad  \mathbf{d}_{j;t_{n+1}}=\mathbf{d}_{j;t_{n}},   &&j\neq 2,n=6k,\\
\mathbf{d}_{1;t_{n+1}} &= -\mathbf{d}_{1;t_n} + 2\mathbf{d}_{2;t_n}, \quad  \mathbf{d}_{j;t_{n+1}}=\mathbf{d}_{j;t_{n}},   &&j\neq 1,n=6k+1,\\
\mathbf{d}_{3;t_{n+1}} &= -\mathbf{d}_{3;t_n} + 2\mathbf{d}_{2;t_n}, \quad  \mathbf{d}_{j;t_{n+1}}=\mathbf{d}_{j;t_{n}},   &&j\neq 3,n=6k+2,\\
\mathbf{d}_{2;t_{n+1}} &= -\mathbf{d}_{2;t_n} + \mathbf{d}_{1;t_n} +\mathbf{d}_{3;t_n}, \quad  \mathbf{d}_{j;t_{n+1}}=\mathbf{d}_{j;t_{n}},   &&j\neq 2,n=6k+3,\\
\mathbf{d}_{3;t_{n+1}} &= -\mathbf{d}_{3;t_n} + 2\mathbf{d}_{2;t_n}, \quad  \mathbf{d}_{j;t_{n+1}}=\mathbf{d}_{j;t_{n}},   &&j\neq 3,n=6k+4,\\
\mathbf{d}_{1;t_{n+1}} &= -\mathbf{d}_{1;t_n} + 2\mathbf{d}_{2;t_n}, \quad  \mathbf{d}_{j;t_{n+1}}=\mathbf{d}_{j;t_{n}},   &&j\neq 1,n=6k+5.
\end{align*}
Similar to Section~\ref{app:A13},
\begin{equation}
\begin{aligned}
\mathbf{d}_{2;t_{n}} &= (k,2k+1,k,0) ,&&n=3k+1,3k+2,3k+3.
\end{aligned}
\label{eq:11}
\end{equation}
Thus $(ab)^n = id$ iff $n=0$, and both elements have infinite order.

\FloatBarrier

\end{document}